\documentclass{amsart}

\usepackage{graphicx,mathrsfs,amssymb}

\usepackage[pdfdisplaydoctitle=true,colorlinks=true,urlcolor=blue,citecolor=blue,linkcolor=blue,pdfstartview=FitH,pdfpagemode=None,bookmarksnumbered=true]{hyperref}
\usepackage{cmap} 

\setlength{\oddsidemargin}{50pt}     
\setlength{\evensidemargin}{50pt}
\setlength{\parindent}{0pt}
\setlength{\parskip}{0.5ex}
\frenchspacing

\newtheorem{theorem}{Theorem}[section]
\newtheorem{lemma}[theorem]{Lemma}

\newtheorem{proposition}[theorem]{Proposition}
\newtheorem{remark}[theorem]{Remark}
\newtheorem{definition}[theorem]{Definition}

\theoremstyle{remark}
\newtheorem{example}[theorem]{Example}

\numberwithin{equation}{section}


\newcommand{\cz}{{\mathbb C}}

\newcommand{\nz}{{\mathbb N}}

\newcommand{\rz}{{\mathbb R}}

\newcommand{\calD}{\mathcal{D}}
\newcommand{\calE}{\mathcal{E}}

\newcommand{\calK}{\mathcal{K}}

\newcommand{\calM}{\mathcal{M}}

\newcommand{\calV}{\mathcal{V}}
\newcommand{\calW}{\mathcal{W}}

\newcommand{\fraka}{\mathfrak{a}}
\newcommand{\frakE}{\mathfrak{E}}
\newcommand{\frakg}{\mathfrak{g}}

\newcommand{\scrC}{\mathscr{C}}
\newcommand{\scrD}{\mathscr{D}}
\newcommand{\scrF}{\mathscr{F}}

\newcommand{\scrL}{\mathscr{L}}
\newcommand{\scrS}{\mathscr{S}}

\newcommand{\cicomp}{\mathcal{C}^\infty_{\text{\rm comp}}}

\newcommand{\dbar}{d\hspace*{-0.08em}\bar{}\hspace*{0.1em}}
\newcommand{\eps}{\varepsilon}

\newcommand{\forget}[1]{}

\newcommand{\lra}{\longrightarrow}

\newcommand{\re}{\mathrm{Re}\,}

\newcommand{\rpbar}{\overline{\rz}_+}
\newcommand{\skp}[2]{\langle#1,#2\rangle}

\newcommand{\smsum}{\mathop{\mbox{\large$\sum$}}}
\newcommand{\spk}[1]{\langle#1\rangle}
\newcommand{\st}{\mbox{\boldmath$\;|\;$\unboldmath}}

\newcommand{\wh}{\widehat}
\newcommand{\wt}{\widetilde}

\begin{document}
\title[NATURAL DOMAINS FOR EDGE-DEGENERATE DIFFERENTIAL OPERATORS]{NATURAL DOMAINS FOR 
EDGE-DEGENERATE\\[2mm] DIFFERENTIAL OPERATORS}

\author{J\"org Seiler}
\address{Loughborough University, School of Mathematics, Leicestershire LE11 3TU, 
United Kingdom}
\email{j.seiler@lboro.ac.uk}

\begin{abstract}
We study cone differential operators on the half-axis and edge-degenerate differential operators on a half-space. 
We construct subspaces of edge Sobolev spaces that can be considered as natural domains for edge-degenerate 
operators and indicate how they can be used in the study of boundary problems for edge-degenerate operators. 
\end{abstract}

\maketitle
\section{Introduction}\label{sec:intro}

Let $\Omega$ be compact domain in Euclidean space with smooth boundary $Y:=\partial\Omega$. 
An elliptic differential operator $A$ of order $\mu\in\nz$ on $\Omega$ induces mappings  
 $$A:H^{s}(\Omega)\lra H^{s-\mu}(\Omega), \qquad s\in\rz.$$
Any of these maps generally fails to be a Fredholm operator, and for this reason one seeks 
to complete $A$ with $($differential$)$ boundary conditions $T$ to a map 
 $$\begin{pmatrix}A\\T\end{pmatrix}:H^{s}(\Omega)\lra \begin{matrix}H^{s-\mu}(\Omega)\\ 
    \oplus\\ H^{s-\mu}(\partial\Omega,\cz^k)\end{matrix},
    \qquad s>\mu-\frac{1}{2}$$
(for notational convenience we have unified orders on the right-hand side by 
applying suitable order reductions on the boundary; the requirement on $s$ arises from the 
fact that the map of restricting smooth functions from the domain to the boundary extends  
continuously to a map from $H^s(\Omega)$ to $H^{s-1/2}(Y)$ only for $s>1/2$). 
A pseudodifferential calculus containing such kind of operators and parametrices of  
elliptic problems is Boutet de Monvel's algebra \cite{Bout}. Whether one can find boundary 
conditions completing $A$ to a `Shapiro-Lopatinskij' elliptic boundary value problem depends 
on the boundary symbol of $A$, 
 $$\sigma_\partial^\mu(A)(y,\eta):H^s(\rz_+)\lra H^{s-\mu}(\rz_+)$$
which is a family of Fredholm operators defined on the co-sphere bundle $S^*Y$ of the boundary. 
It induces an `index element' in the $K$-group $K(S^*Y)$. There exist elliptic boundary conditions precisely 
when this index element satisfies the Atiyah-Bott condition, i.e., belongs to 
$\pi^*K(Y)$, the pull-back of the $K$-group over the boundary under the natural 
projection $\pi:S^*Y\to Y$. If $A$ in local coordinates $(y,t)\in\rz^q\times\rz_+$ near the boundary 
has the form $\sum_{j+|\alpha|\le\mu}a_{j\alpha}(y,t)D_y^\alpha D_t^j$, 
the boundary symbol is given by 
 $$\sigma_\partial^\mu(A)(y,\eta)=\sum_{j+|\alpha|=\mu}a_{j\alpha}(y,0)\eta^\alpha D_t^j.$$

The question arises, if and how one could organize a corresponding calculus when the 
differential operators are not smooth up to the boundary but have a more singular behaviour. 
The structure we have in mind here are `edge-degenerate' differential operators, i.e., $A$ 
is away from the boundary a usual differential operator, but near the boundary is of the 
form (in local coordinates)
 $$A=t^{-\mu}\sum_{j+|\alpha|=0}^\mu a_{j\alpha}(y,t)(tD_y)^\alpha(-t\partial_t)^j,$$
with coefficients $a_{j\alpha}$ which are smooth up to the boundary. Note that any usual 
differential operator can be rewritten in this degenerate form, but not vice versa. This 
particular degeneracy arises naturally in the analysis of differential operators on manifolds 
with edges where the natural `geometric' operators like the Laplacian are of this form 
(though in this case a neighborhood of the edge is a cone bundle with fibre $\rz_+\times X$ 
for a closed manifold $X$, rather than $X=\{\mathrm{point}\}$ as in the case of a bounded 
domain). The usual Sobolev spaces are not the natural spaces to be used in 
this setting and it is not clear which kind of `boundary conditions' one should pose -- 
if possible at all. 

In the 1980's Schulze developed a pseudodifferential calculus -- the `edge algebra' -- 
adapted to edge-degenerate operators (not only on bounded domains but on manifolds with edges), 
see for example the monographs \cite{ReSc} and \cite{Schu1}. The 
corresponding scale of edge Sobolev spaces $\calW^{s,\gamma}(\Omega)$, $s,\gamma\in\rz$, 
(for a precise definition see Section \ref{sec:4.1}) is different from the standard one. 
There are many similarities, but also essential differences, between this calculus and that of 
Boutet de Monvel. The role of the boundary symbol for example is now played by the principal edge 
symbol, defined as 
  $$\sigma^\mu_\wedge(A)(y,\eta)
    =t^{-\mu}\sum_{j+|\alpha|=0}^\mu a_{j\alpha}(y,0)(t\eta)^\alpha(-t\partial_t)^j:
    \calK^{s,\gamma}(\rz_+)\lra \calK^{s-\mu,\gamma-\mu}(\rz_+).$$
Here, $\calK^{s,\gamma}(\rz_+)$ refers to certain Sobolev spaces on the half-axis, cf. 
Definition \ref{def:conesob}. A main difference between the two calculi concerns the type of boundary 
respectively edge conditions. In the smooth setting restriction to the boundary is a well-defined 
operation for the standard Sobolev spaces but this is not anymore the case for the edge spaces. 
Correspondingly the boundary conditions are of different nature. In \cite{KSS} Kapanadze, 
Schulze and the author constructed an extended edge algebra using an enlarged scale 
of edge Sobolev spaces that allowed to generalize the restriction-to-the-boundary mappings, 
and so to interpret Boutet de Monvel's algebra as a subalgebra in this larger 
calculus. The main idea is to replace Taylor asymptotics of functions at the boundary by a more 
general type of asymptotic behaviour. While the standard boundary conditions can be 
interpreted, roughly speaking, as functionals acting on the Taylor coefficients, the 
generalized boundary conditions do act on the coefficients of the more general expansions. 

Though this calculus extends the one of Boutet de Monvel, it appears being somewhat too 
coarse to tackle in full generality the above posed question -- what are natural domains 
and how to find associated Fredholm problems. In a certain sense the enlarged spaces are too big. 
In this paper we discuss how to further refine the calculus from \cite{KSS} to achieve this 
goal. We shall present the basic idea in a model situation where the operators 
are defined in a half space rather than on a bounded domain, and have constant coefficients along 
the boundary (actually, it is enough that the first $\mu$ conormal symbols of the operator are 
$y$-independent). 

In Section \ref{sec:3} we discuss closed extensions of elliptic cone differential 
operators on the half-line (actually, all the constructions extend to also cover the case 
of an infinite cone $\rz_+\times X$ over a non-trivial cone base $X$ rather than the half-axis). 
The analysis of such extensions was initiated by Lesch \cite{Lesc} and later on refined and 
extended by other authors, see for example Gil, Mendoza \cite{GiMe}, Schrohe, Seiler \cite{SchrSe}, 
Gil, Krainer, Mendoza \cite{GKM2} and Coriasco, Schrohe, Seiler \cite{CSS2}. We reprove here some 
of the known results, using a formalism following \cite{SchrSe}. 
In Section \ref{sec:4} we use this approach to construct natural domains for 
edge-degenerate differential operators. We show that they naturally arise 
as subspaces of edge Sobolev spaces with asymptotics (the latter known from the standard edge 
calculus) and we construct natural pseudodifferential projections onto these spaces. The  
principal symbols of these projections yield pointwise projections onto the maximal domain of the 
principal edge symbol, viewed as a family of cone differential operators. 
In the last part of Section \ref{sec:4} we indicate how these projections can be used 
to formulate a refined version of the calculus from \cite{KSS} in `projected subspaces'.  

\section{The Laplacian on a Half-space}\label{sec:2}

The meaning of this section is to discuss a simple example, the Laplacian $\Delta$ on the 
half space $\Omega:=\rz^q\times\rz_+$, in a way that motivates our approach later on. 
So, we will give some fancy looking explanation why it is natural to choose $H^2(\Omega)$ 
as the domain for $\Delta$ in $L^2(\Omega)$. 

Let us introduce the following family of maps $\kappa_\lambda$, $\lambda>0$, that 
acts on functions $($or distributions$)$ on $\Omega$ by 
 $$(\kappa_\lambda u)(y,t)=\lambda^{1/2}u(y,\lambda t), \qquad (y,t)\in\rz^q\times\rz_+.$$
Thus $\kappa_\lambda$ acts essentially as a dilation on the $t$-variable; the factor 
$\lambda^{1/2}$ makes $\kappa_\lambda$ an isometrie on $L^2(\Omega)$. Obviously, 
the $\kappa_\lambda$ form a group, i.e., $\kappa_\lambda\kappa_\rho=\kappa_{\lambda\rho}$ 
and $\kappa_1=\mathrm{id}$. We can now define the operator 
 $$L:=\scrF^{-1}_{\eta\to y}\, \kappa_{\spk{\eta}}^{-1}\, \scrF_{y^\prime\to\eta}$$
where we write $\spk{\eta}=(1+|\eta|^2)^{1/2}$ and $\scrF$ is the standard Fourier transform. 
By a direct (formal) calculation it is then easy to see that conjugating $1-\Delta$ with $L$ gives 
 $$\wt{A}:=L\,(1-\Delta)\,L^{-1}=(1-\Delta_y)(1-\partial_t^2).$$ 
Thus we have split $1-\Delta$ in two operators, one along the boundary and one in direction normal 
to the boundary. Now the maximal domain of $(1-\Delta_y)$ in $L^2(\rz^q)$ is just 
$H^2(\rz^q)$, while the maximal domain of $(1-\partial_t^2)$ in $L^2(\rz_+)$ can be shown to be 
$H^2(\rz_+)$. So it is natural to take $H^2(\rz^q,H^2(\rz_+))$ as domain for $\wt{A}$ in 
$L^2(\Omega)=L^2(\rz^q,L^2(\rz_+))$. So the natural domain for $\Delta$ itself is 
$L^{-1}H^2(\rz^q,H^2(\rz_+))$ which can be shown to coincide with $H^2(\Omega)$, 
cf.\ Section \ref{sec:4.1}. 

This approach can be used to find natural domains for general edge-degenerate operators. 
Conjugation with $L$ amounts to a splitting of operators where on $\rz_+$ we will obtain 
Fuchs-type differential operators. We study the maximal domains of such operators in the next section.

\section{Fuchs-type Differential Operators on the Half-axis}\label{sec:3}

In this section we let $A$ denote an elliptic Fuchs-type differential operator on the 
half-axis. More precisely, we assume that $A$ is a differential operator of order $\mu$ 
with smooth coefficients, that near $t=0$ has the form 
\begin{equation}\label{eq:conediff}
 A= t^{-\mu}\sum_{j=0}^\mu a_j(t)(-t \partial_t)^j,\qquad a_j\in\scrC^\infty(\rpbar)
\end{equation}
(in case of a non trivial cone base $X$ the coefficient functions $a_j(t)$ take values in the differential 
operators on $X$ of order at most $\mu-j$). 
We can write $A=a(t,D_t)$ with a symbol $a(t,\tau)$ which is a polynomial in $\tau$. 
We shall assume that 
 $$|\partial_t^j\partial_\tau^k a(t,\tau)|\le C_{jk}\spk{\tau}^{\mu-k}$$
uniformly in $t\ge 1$ and $\tau\in\rz$ for any integers $j$ and $k$. We also assume that this operator 
is elliptic in the following sense:  
\forget{ 
$A$ is elliptic at infinity in the sense that for suitable constants $C$ and $R$ 
 $$|a(t,\tau)^{-1}|\le C\spk{\tau}^{-\mu}\qquad \forall\;(t,\tau)\ge R$$
whenever $t\ge 1$, the principal symbol $\sigma^\mu_\psi(a)(t,\tau)$ never vanishes for $\tau\not=0$, 
and the `rescaled'  principal symbol $t^\mu\sigma^\mu_\psi(a)(t,t^{-1}\tau)$ never vanishes 
for $\tau\not=0$ and all $t\ge 0$. 
}
\begin{itemize}
 \item[(i)] There are constants $C$ and $R$ such that for $t\ge1$ 
   $$|a(t,\tau)^{-1}|\le C\spk{\tau}^{-\mu}\qquad \forall\;(t,\tau)\ge R,$$
 \item[(ii)] the principal symbol $\sigma^\mu_\psi(a)(t,\tau)$ never vanishes for 
   $\tau\not=0$,  
 \item[(iii)] the rescaled symbol $t^\mu\sigma^\mu_\psi(a)(t,t^{-1}\tau)$ never vanishes 
   for $\tau\not=0$. 
\end{itemize}

We shall now derive explicit descriptions of the maximal extension of $A$ when considered 
as an unbounded in $L^2(\rz_+)$, initially defined on the space of smooth compactly supported 
test functions (the results extend in a straightforward way to the framework of 
Fuchs-type operators on an infinite cone $\rz_+\times X$ with a closed cross-section $X$ 
of arbitrary dimension). 

\subsection{Cone Sobolev spaces} \label{sec:3.1}

We need to recall the definitions of certain cone Sobolev spaces on $\rz_+$. We fix a cut-off function 
$\omega\in\scrC^\infty_0(\rpbar)$, i.e., $\omega$ is smooth and compactly supported and $\omega\equiv1$ in 
some neighborhood of $t=0$. 

\begin{definition}\label{def:conesob}
For $s$ a non negative integer and $\gamma\in\rz$ let $\calK^{s,\gamma}(\rz_+)$ denote the space of all 
distributions satisfying $(1-\omega)u\in H^s(\rz_+)$ and 
 $$t^{-\gamma}(t\partial_t)^j(\omega u)(t)\in L^2(\rz_+,dt)\qquad \forall\; j\le s.$$
\end{definition}

Note that $\gamma$ indicates a power weight in $t$ for $t\to 0$. 
This spaces can be equipped with the structure of a Hilbert space and the definition can also be extended to cover 
arbitrary real $s\in\rz$. We shall omit any details. Note that $\calK^{0,0}(\rz_+)$ coincides with the space 
$L^2(\rz_+,dt)$, but that $\calK^{s,\gamma}(\rz_+)$, $s\not=0$, is different from $H^s(\rz_+)$ for any chioce of 
$\gamma$. In the particular case $s=\gamma\ge0$ with $s-1/2\not\in\nz_0$ it can be shown that 
$\calK^{s,s}(\rz_+)$ coincides with the closure of $\scrC^\infty_0(\rz_+)$ in $H^s(\rz_+)$. 

The weighted spaces are natural for cone differential operators, in the sense that $A$ from 
\eqref{eq:conediff} induces continuous mappings 
 $$A:\calK^{s,\gamma}(\rz_+)\lra\calK^{s-\mu,\gamma-\mu}(\rz_+),\qquad s,\gamma\in\rz.$$

\begin{definition}\label{def:coneasymp}
Let $\gamma\in\rz$ and $\theta>0$. Then $\mathrm{As}(\gamma,\theta)$ consists of all finite subsets  
$S\subset\cz\times\nz_0$ such that $1/2-\gamma-\theta<\re p<1/2-\gamma$ for any point $(p,n)\in S$ and 
such that to any $p\in\cz$ there is at most one element $(p,n)$ belonging to $S$. We define the function space
$\calE_S\subset\scrC^\infty(\rz_+)$ as  
 $$\calE_S=\Big\{t\mapsto \omega(t)\sum_{i=0}^m\sum_{j=0}^{n_i}a_{ij}t^{-p_i}\log^j t \mid a_{ij}\in\cz\Big\},$$
provided $S=\{(p_0,n_0),\dots,(p_m,n_m\}$. 
\end{definition}

These spaces arise natural in the formulation of elliptic regularity for cone differential operators and below in 
the description of their closed extensions. Note that $\calE_S$ is finite-dimensional, hence carries a natural topology. 
In case $S=\{(-i,0)\st i=0,\ldots,m\}$ the space $\calE_S$ can be interpreted as the space of Taylor polynomials 
of degree $m$, and in this sense the described asymptotic structure is a generalization of Taylor asymptotics. 

\subsection{The maximal domain of a cone differential operator}\label{sec:3.2}

Let $A$ be as in \eqref{eq:conediff} and associate with $A$ its model cone operator $\wh{A}$
which is defined by 
\begin{equation}\label{eq:modelcone}
 \wh{A}= t^{-\mu}\sum_{k=0}^\mu a_k(0)(-t \partial_t)^k
\end{equation}
on $\rz_+$. We shall now describe the spaces 
\begin{align*}
 \scrD_{\max}(\wh{A})&=\{u\in L^2(\rz_+)\st \wh{A}u\in L^2(\rz_+)\}, \\
 \scrD_{\max}(A)&=\{u\in L^2(\rz_+)\st Au\in L^2(\rz_+)\} 
\end{align*}
and a canonical relation between them which is due to \cite{GiMe}, \cite{GKM2}. 
For convenience of notation we work in $L^2(\rz_+)=\calK^{0,0}(\rz_+)$; all what will 
be said has straightforward reformulations in the case $\calK^{s,\gamma}(\rz_+)$ with $s,\gamma\in\rz$. 

We shall need the sequence of so-called conormal symbols of $A$, defined by 
\begin{equation}\label{eq:conormal}
 f_\ell(z)=\sum_{j=0}^\mu a_j^{(\ell)}z^j,\qquad 
 a_j^{(\ell)}:=\frac{1}{\ell!}\frac{d^\ell a_j}{dt^\ell}(0). 
\end{equation}
These are polynomials in the complex variable $z$. Due to the ellipticity of $A$ the principal conormal 
symbol $f_0$ is different from zero, hence $f_0^{-1}$ is a meromorphic function (in case of a non trivial 
cone base, $f_0$ is a holomorphic function with values in the $\mu$-th order differential operators which 
turns out to be meromorphically invertible, any vertical strip in the complex plane of finite width only 
containing finitely many poles; the Laurent coefficients are then smoothing pseudodifferential operators on $X$) . 
In case $f_0^{-1}$ has no pole with real part equal to $1/2-\mu$ it is known, cf. \cite{Lesc}, that 
\begin{equation*}
 \mathrm{dim}\,\calD_{\max}(\wh{A})\big/\calK^{\mu,\mu}(\rz_+)= 
 \mathrm{dim}\,\calD_{\max}({A})\big/\calK^{\mu,\mu}(\rz_+)<\infty  
\end{equation*} 
and that there exist finite-dimensional spaces $\wh\calE$ and $\calE$ of smooth 
functions on $\rz_+$ such that 
\begin{align*}
 \scrD_{\max}(\wh{A})=\calK^{\mu,\mu}(\rz_+)\oplus\wh\frakE, \qquad
 \scrD_{\max}(A)=\calK^{\mu,\mu}(\rz_+)\oplus\frakE.
\end{align*}
Obviously the above equality of dimensions means that 
$\mathrm{dim}\,\wh\frakE=\mathrm{dim}\,\frakE$. In case $f_0^{-1}$ has a pole on the line 
$\re z=1/2-\mu$, the above remains true upon replacing $\calK^{\mu,\mu}(\rz_+)$ by 
 $$\calD_{\min}(A):=\calD_{\max}(A)\cap
     \mathop{\mbox{\LARGE$\cap$}}_{\eps>0}\calK^{\mu,\mu-\eps}(\rz_+).$$ 

We shall now describe a constructive method how to determine the spaces $\wh\frakE$ and 
$\frakE$, which at the same time establishes a canonical 1-1-correspondence between the 
subspaces of $\wh\frakE$ and $\frakE$. This correspondence coincides with that found in 
\cite{GKM2} and plays an important role in the study of the resolvent of $A$, see 
Remark \ref{thm:resolvent} below. We will use the following notation: 
\begin{equation}\label{eq:sigma}
 \Sigma=\Big\{\sigma\in\cz\st \sigma\text{ is a pole of $f_0^{-1}$ and } 
  1/2-\mu<\re\sigma<1/2\Big\}.
\end{equation}
Let us now describe the maximal domain of the model cone operator.  We let 
$\omega, \omega_0\in\scrC^\infty_0(\rpbar)$ be arbitrary cut-off functions and use the 
Mellin transform 
 $$\wh{u}(z)=\int_0^\infty t^{z} u(t)\frac{dt}{t}.$$

\begin{theorem}
For $\sigma\in\Sigma$ define $G_\sigma^{(0)}:\calK^{0,\mu}(\rz_+)\to\calK^{\infty,0}(\rz_+)$ 
by 
 $$(G_\sigma^{(0)} u)(t)=\omega(t)\int_{|z-\sigma|=\eps} 
   t^{-z}f_0^{-1}(z)\wh{\omega_0 u}(z)\,\dbar z,$$
where $\eps>0$ is so small that there is no other pole of $f_0^{-1}$ having distance to 
$\sigma$ less or equal to $\eps$. Then 
 $$\wh\frakE=\mathop{\mbox{\Large$\oplus$}}_{\sigma\in\Sigma} \wh\frakE_\sigma,\qquad 
     \wh\frakE_\sigma=\mathrm{range}\,G_\sigma^{(0)}.$$
\end{theorem}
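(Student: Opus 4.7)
The plan is to combine a Mellin contour shift across the strip $1/2-\mu<\re z<1/2$ with the identification $\scrD_{\min}(\wh A)=\calK^{\mu,\mu}(\rz_+)$ (elliptic regularity in the cone calculus). First I verify that each range sits where claimed. For $u\in\calK^{0,\mu}(\rz_+)$, $\omega_0 u$ is compactly supported so $\widehat{\omega_0 u}$ is holomorphic on $\re z>1/2-\mu$; evaluating the residue explicitly,
\begin{equation*}
 G_\sigma^{(0)}u(t)=\omega(t)\,t^{-\sigma}\sum_{\ell=0}^{m_\sigma}\alpha_\ell(u)\,(-\log t)^\ell,
\end{equation*}
where $m_\sigma+1$ is the order of the pole of $f_0^{-1}$ at $\sigma$. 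Since $\wh A(t^{-z})=f_0(z)t^{-z-\mu}$, applying $\wh A$ termwise under the residue integral replaces $f_0^{-1}\widehat{\omega_0 u}$ by $\widehat{\omega_0 u}$, which is holomorphic at $\sigma$, so the residue vanishes; consequently $\wh A(G_\sigma^{(0)}u)$ reduces to $[\wh A,\omega]R_\sigma$, a smooth function with compact support in $(0,\infty)$, and $G_\sigma^{(0)}u\in\scrD_{\max}(\wh A)$. The strip condition $1/2-\mu<\re\sigma<1/2$ makes these asymptotic terms $L^2$ near $0$ but not in $\calK^{\mu,\mu}(\rz_+)$, and distinct $\sigma$ give linearly independent leading powers, so $\bigoplus_{\sigma\in\Sigma}\wh\frakE_\sigma\hookrightarrow\wh\frakE$.

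For surjectivity I take $u\in\scrD_{\max}(\wh A)$ and put $v=\omega_0 u$, $w=\wh A v=\omega_0\wh A u+[\wh A,\omega_0]u$; both are compactly supported $L^2$ functions. Their Mellin transforms are holomorphic in $\re z>1/2$ and satisfy $\hat w(z)=f_0(z-\mu)\hat v(z-\mu)$, so rewriting $\hat v(z)=f_0(z)^{-1}\hat w(z+\mu)$ provides a meromorphic continuation of $\hat v$ to $\re z>1/2-\mu$ whose only poles are those of $f_0^{-1}$ in $\Sigma$. Applying Cauchy's theorem to long rectangles to shift the Mellin inversion contour from $\re z=1/2$ to $\re z=1/2-\mu$ yields
\begin{equation*}
 v(t)=\frac{1}{2\pi i}\int_{\re z=1/2-\mu}t^{-z}\hat v(z)\,dz+\sum_{\sigma\in\Sigma}R_\sigma(t),
\end{equation*}
with $R_\sigma(t)=\mathop{\mathrm{Res}}_{z=\sigma}\bigl(t^{-z}f_0(z)^{-1}\hat w(z+\mu)\bigr)$. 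Each $\omega R_\sigma$ has the form displayed in the first paragraph and belongs to $\wh\frakE_\sigma$ once one observes that the Taylor-data map $u\mapsto(\widehat{\omega_0 u}^{(j)}(\sigma))_{0\le j\le m_\sigma}$ is surjective from $\calK^{0,\mu}(\rz_+)$ onto $\cz^{m_\sigma+1}$, so that the range of $G_\sigma^{(0)}$ exhausts all admissible coefficient combinations. Setting $u_0:=u-\sum_{\sigma\in\Sigma}\omega R_\sigma$, both $u_0$ and $\wh A u_0$ lie in $L^2$; the meromorphic extension of $\widehat{\omega_0 u_0}$ has no poles in the strip by construction, so inverse Mellin on $\re z=1/2-\mu$ yields $\omega_0 u_0\in\calK^{0,\mu}$, and elliptic regularity upgrades this to $u_0\in\calK^{\mu,\mu}(\rz_+)$ via $\scrD_{\min}(\wh A)=\calK^{\mu,\mu}$.

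The main obstacle is the rigorous execution of the contour shift: the integrand $f_0^{-1}(z)\hat w(z+\mu)$ is only $L^2$ on vertical lines, so the vanishing of the horizontal sides of the rectangle as $|\im z|\to\infty$ is not automatic and requires combining the polynomial decay $f_0^{-1}(z)=O(|z|^{-\mu})$ on vertical strips with an averaging argument for $\hat w$. A subsidiary issue is the boundary case when $f_0^{-1}$ has a pole on $\re z=1/2-\mu$; the text indicates that one then replaces $\calK^{\mu,\mu}$ by $\scrD_{\min}(\wh A)$, which amounts to shifting the inversion contour to a slightly different weight.
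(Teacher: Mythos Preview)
The paper does not prove this theorem; it simply states that the result is well-known and omits the proof. Your Mellin contour-shift argument is exactly the standard route by which this fact is established in the literature (cf.\ Lesch \cite{Lesc}), and the outline is correct.

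Two small remarks. First, in the second paragraph you assert that $w=\wh A(\omega_0 u)=\omega_0\wh A u+[\wh A,\omega_0]u$ is a compactly supported $L^2$ function; since $[\wh A,\omega_0]$ is a differential operator of order $\mu-1$ with coefficients supported away from $t=0$, this requires $u\in H^\mu_{\mathrm{loc}}((0,\infty))$, which follows from interior elliptic regularity but should be mentioned. Second, the surjectivity of the Taylor-data map $u\mapsto\bigl(d_z^j\widehat{\omega_0 u}(\sigma)\bigr)_{0\le j\le m_\sigma}$ that you invoke is precisely Lemma~\ref{lem:meromorph} of the paper; together with the invertibility of the triangular matrix $B_\sigma$ built from the Laurent coefficients of $f_0^{-1}$ this yields $\wh\frakE_\sigma=\calE_{\{(\sigma,n_\sigma)\}}$, which is how the paper records the conclusion in Proposition~\ref{prop:hatE}. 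The contour-shift technicality you flag is genuine but routine: one clean way around the $L^2$-on-lines issue is to first carry out the argument for $u\in\cicomp(\rz_+)$ (where all Mellin transforms decay rapidly along vertical lines) and then pass to the closure, using that $\calK^{\mu,\mu}(\rz_+)\oplus\bigoplus_{\sigma\in\Sigma}\wh\frakE_\sigma$ is closed in the graph norm of $\wh A$.
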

This result is well-known and we omit the proof. To decribe the maximal domain of $A$ itself define 
recursively 
\begin{equation}\label{eq:recursion}
 g_0=1,\qquad
 g_\ell=-(T^{-\ell}f^{-1}_0)\smsum_{j=0}^{\ell-1}(T^{-j}f_{\ell-j})g_j,\qquad \ell\in\nz,
\end{equation}
with $T^\rho$, $\rho\in\rz$, acting on meromorphic functions by $(T^\rho f)(z)=f(z+\rho)$. 
The $g_j$ are meromorphic and the recursion is equivalent to 
\begin{equation}\label{eq:recursion2}
 \smsum_{\ell=0}^j(T^{-\ell}f_{j-\ell}){g}_\ell
 =\begin{cases}f_0&\quad: j=0\\ 0&\quad: j\ge 1\end{cases}.
\end{equation}
If $h$ is a meromorphic function, denote by $\Pi_\sigma h$ the principal part of the 
Laurent series in $\sigma$; 
of course if $h$ is holomorphic in $\sigma$ then $\Pi_\sigma h=0$. 

\begin{theorem}\label{thm:theta}
For $\sigma\in\Sigma$ and $\ell\in\nz$ define  
$G_\sigma^{(\ell)}:\calK^{0,\mu}(\rz_+)\to\calK^{\infty,0}(\rz_+)$ by 
 $$(G_\sigma^{(\ell)} u)(t,x)=\omega(t) t^\ell\,
   \int_{|z-\sigma|=\eps} t^{-z}g_\ell(z)\,\Pi_\sigma(f_0^{-1}\wh{\omega_0u})(z)\,\dbar z,$$
as well as 
\begin{equation}\label{eq:gsigma}
 G_\sigma:=\smsum_{\ell=0}^{\mu_\sigma}G_\sigma^{(\ell)},\qquad
 \mu_\sigma=[\re\sigma+\mu-1/2],
\end{equation}
where $[x]$ denotes the integer part of $x\in\rz$. Then 
 $$\frakE=\mathop{\mbox{\Large$\oplus$}}_{\sigma\in\Sigma} \frakE_\sigma,
   \qquad \frakE_\sigma=\mathrm{range}\,G_\sigma.$$
Moreover, the following map is well-defined and an isomorphism$:$
\begin{equation}\label{eq:isom1}
  \theta_\sigma:\frakE_\sigma\longrightarrow\wh{\frakE}_\sigma,\quad 
   G_\sigma(u)\mapsto G^{(0)}_\sigma(u).
\end{equation}
\end{theorem}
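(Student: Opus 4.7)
The plan is to reduce the claim to a Mellin-calculus computation near $t=0$, then combine with residue and linear-independence arguments. Observe first that both $G_\sigma^{(\ell)}u$ and $G_\sigma u$ depend on $u$ only through the Laurent principal part $h:=\Pi_\sigma(f_0^{-1}\wh{\omega_0 u})$ at $z=\sigma$: since $\sigma\in\Sigma$ lies in the strip $1/2-\mu<\re z<1/2$ and $\omega_0 u$ has compact support, $\wh{\omega_0 u}$ is holomorphic there, so the pole order of $h$ equals that of $f_0^{-1}$, and the map $u\mapsto h$ surjects onto this space of principal parts. Once the containment $G_\sigma u\in\scrD_{\max}(A)$ is known, the decomposition $\frakE=\bigoplus_\sigma\frakE_\sigma$ will follow from the isomorphism $\theta_\sigma$ combined with the direct-sum decomposition of $\wh\frakE$ (preceding theorem) and the dimension identity $\dim\scrD_{\max}(A)/\calK^{\mu,\mu}=\dim\scrD_{\max}(\wh A)/\calK^{\mu,\mu}$ recalled from \cite{Lesc}.

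For $AG_\sigma u\in L^2(\rz_+)$ I would Taylor-expand the coefficients as $a_j(t)=\sum_{k=0}^N a_j^{(k)}t^k+t^{N+1}r_{j,N}(t)$, giving $A=t^{-\mu}\sum_{k=0}^N t^k\,f_k(-t\partial_t)+t^{N+1-\mu}R_N$ with $R_N$ a cone differential operator of order $\mu$. Using $(-t\partial_t)^j t^{\ell-z}=(z-\ell)^j t^{\ell-z}$ one verifies
\begin{equation*}
f_k(-t\partial_t)\Bigl[\,t^\ell\int_{|z-\sigma|=\eps} t^{-z}\phi(z)\,\dbar z\,\Bigr]
=t^\ell\int_{|z-\sigma|=\eps} t^{-z}f_k(z-\ell)\phi(z)\,\dbar z.
\end{equation*}
Applying this with $\phi=g_\ell h$, grouping by total order $r=\ell+k$, and absorbing the $R_N$-remainder (for $N$ large) and the $[A,\omega]$-commutator (compactly supported away from $0$) into $L^2$, one arrives at
\begin{equation*}
A\,G_\sigma u \;\equiv\; \sum_{r\ge 0} t^{r-\mu}\int_{|z-\sigma|=\eps} t^{-z} h(z)\!\!\sum_{\substack{\ell+k=r\\ 0\le\ell\le\mu_\sigma}}\!\! f_k(z-\ell)g_\ell(z)\,\dbar z \pmod{L^2(\rz_+)}.
\end{equation*}

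The recursion \eqref{eq:recursion2} is the meromorphic identity $\sum_{\ell+k=r} f_k(z-\ell) g_\ell(z)=\delta_{r,0}f_0(z)$. For $r=0$ the integrand reduces to $f_0(z)h(z)$, which differs from $\wh{\omega_0 u}(z)$ by a function holomorphic at $\sigma$, so the contour integral vanishes. For $1\le r\le\mu_\sigma$ the recursion gives outright cancellation of the integrand. For $r\ge\mu_\sigma+1$ only part of the sum survives, and the residue at $\sigma$ produces a function of the form $t^{r-\mu-\sigma}$ times a polynomial in $\log t$, which lies in $L^2$ near $t=0$ since $\re(r-\mu-\sigma)>-1/2$ by the very choice $\mu_\sigma=[\re\sigma+\mu-1/2]$. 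This establishes $G_\sigma u\in\scrD_{\max}(A)$.

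For the isomorphism \eqref{eq:isom1}, note that $G_\sigma^{(0)} u=\omega(t)\int t^{-z}h(z)\,\dbar z$ reduces by the residue theorem to a polynomial in $\log t$ times $t^{-\sigma}$ whose coefficients are linearly bijective with the Laurent coefficients of $h$; hence $\ker G_\sigma^{(0)}=\ker(u\mapsto h)$. On the other hand $G_\sigma u=\sum_{\ell=0}^{\mu_\sigma}v_\ell$ with $v_\ell$ of leading asymptotic $t^{\ell-\sigma}$ times a log-polynomial, and the family $\{t^{\ell-\sigma}\log^j t\}_{\ell,j}$ is linearly independent, so $G_\sigma u=0$ forces $v_0=G_\sigma^{(0)} u=0$. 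Hence $\ker G_\sigma=\ker G_\sigma^{(0)}$, and $\theta_\sigma$ is a well-defined linear bijection. The main technical obstacle is the Mellin/contour bookkeeping in the computation above; in particular, when some $g_\ell$ happens to have its own pole at $\sigma$ (which can occur if distinct poles of $f_0^{-1}$ differ by positive integers), products of Laurent expansions must be interpreted with care. The recursion, however, is a meromorphic identity, so the cancellation mechanism survives and the argument is unchanged.
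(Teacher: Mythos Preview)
Your argument is correct and follows the same strategy as the paper: Mellin calculus plus the recursion \eqref{eq:recursion2} to kill the singular contributions of $AG_\sigma u$, and the residue theorem plus linear independence of the $t^{\ell-\sigma}\log^j t$ to establish $\ker G_\sigma=\ker G_\sigma^{(0)}$. The one technical difference is that you keep the circle integrals throughout, so the cancellation for $1\le r\le\mu_\sigma$ happens on the level of the integrand (which is identically zero as a meromorphic function), whereas the paper converts $\int_{|z-\sigma|=\eps}$ into a difference of line integrals and must then deal with possible extra poles of the $g_\ell$ in the strip by multiplying through with an auxiliary Mellin transform $\psi$ vanishing at those poles; your organization avoids this device entirely. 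Both routes are equivalent, and yours is marginally cleaner here.
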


The theorem is a consequence of Propositions \ref{prop:max} and \ref{prop:basis} 
below. Before we state and prove these, let us remark that the maps $\theta_\sigma$ induce 
an isomorphism 
 $$\theta:\frakE=\mathop{\mbox{\Large$\oplus$}}\limits_{\sigma\in\Sigma}\frakE_\sigma
     \longrightarrow
     \wh{\frakE}=\mathop{\mbox{\Large$\oplus$}}\limits_{\sigma\in\Sigma}\wh{\frakE}_\sigma,$$
which yields the above mentioned 1-1-correspondence of subspaces of $\frakE$ and $\wh\frakE$, 
respectively. This correpondence is important in view of the following result which is due to 
\cite{GKM2}. 

\begin{remark}\label{thm:resolvent}
Let $\underline{A}$ denote the closed operator in $L^2(\rz_+)$ acting as $A$ on the domain 
$\calK^{\mu,\mu}(\rz_+)\oplus\underline{\frakE}$,  where $\underline{\frakE}$ is a subspace of $\frakE$. 
Moreover, let $\underline{\wh{A}}$ be defined by $\wh{A}$ on the domain 
$\underline{\wh{\frakE}}:=\theta(\underline{\frakE})$. Then a ray $\Gamma=e^{i\varphi}\rz_+$ 
in the complex plane is a ray of minimal 
growth for $\underline{A}$ if and only if it is one for $\underline{\wh{A}}$. 
\end{remark}

\begin{proposition}\label{prop:max}
$\frakE_\sigma$ is a subspace of $\calD_{\max}(A)$. 
\end{proposition}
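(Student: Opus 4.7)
The plan is to verify directly that $G_\sigma u\in L^2(\rz_+)$ and $AG_\sigma u\in L^2(\rz_+)$ for every $u\in\calK^{0,\mu}(\rz_+)$. For the first part, I would evaluate the contour integral in the definition of $G_\sigma^{(\ell)}u$ by residues: since $g_\ell$ is holomorphic at $\sigma$ (its poles from the recursion \eqref{eq:recursion} lie at $\sigma+k$, $k\ge 1$), the integral is a finite combination of terms $t^{-\sigma}(\log t)^j$. Multiplying by $\omega(t)t^\ell$ yields a compactly supported function behaving like $t^{\ell-\sigma}(\log t)^j$ near zero, which is square-integrable since $\re\sigma<1/2\le 1/2+\ell$.

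For the harder statement, I would write $A=t^{-\mu}a(t,-t\partial_t)$ with $a(t,\tau)=\sum_j a_j(t)\tau^j$ and exploit $(-t\partial_t)^j t^{\ell-z}=(z-\ell)^j t^{\ell-z}$, so that
\[
 A\bigl(G_\sigma^{(\ell)}u\bigr)
 = t^{-\mu+\ell}\int_{|z-\sigma|=\eps} t^{-z}\, a(t,z-\ell)\, g_\ell(z)\, h(z)\,\dbar z + [A,\omega]\,G_\sigma^{(\ell)}u,
\]
where $h:=\Pi_\sigma(f_0^{-1}\wh{\omega_0 u})$ and the commutator term is smooth and compactly supported in the interior of $\rz_+$, hence trivially in $L^2$. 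Then I would Taylor-expand $a(t,\tau)=\sum_{k=0}^{\mu_\sigma} t^k f_k(\tau)+t^{\mu_\sigma+1}R(t,\tau)$ with $R$ smooth, sum over $\ell$, and collect by total degree $m=k+\ell$. For $m\le\mu_\sigma$ the corresponding inner sum is $\sum_{\ell=0}^m f_{m-\ell}(z-\ell)g_\ell(z)$, which by the recursion \eqref{eq:recursion2} vanishes for $1\le m\le\mu_\sigma$ and equals $f_0(z)$ for $m=0$. The Taylor remainder and the contributions with $m>\mu_\sigma$ produce terms of the form $t^{m-\mu-\sigma}(\log t)^j$ with $m\ge\mu_\sigma+1$, all square-integrable near zero by the very definition $\mu_\sigma=[\re\sigma+\mu-1/2]$ and the choice \eqref{eq:gsigma}.

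The hard part will be the cancellation of the $m=0$ term $t^{-\mu}\int t^{-z}f_0(z)h(z)\,\dbar z$, which otherwise would contribute a non-integrable singularity $t^{-\mu-\sigma}(\log t)^j$. The key observation is that $f_0\cdot\Pi_\sigma(f_0^{-1}\wh{\omega_0 u})$ is in fact holomorphic at $\sigma$: decomposing $f_0^{-1}\wh{\omega_0 u}=h+r$ with $r$ regular at $\sigma$ gives $f_0 h=\wh{\omega_0 u}-f_0 r$, holomorphic near $\sigma$, so the contour integral vanishes. Everything else is bookkeeping driven by the conormal recursion \eqref{eq:recursion2} and the truncation $\mu_\sigma$.
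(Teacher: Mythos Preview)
Your argument is correct and follows the same core idea as the paper's proof: the cancellation is driven by the conormal recursion \eqref{eq:recursion2}, and what survives is the $m=0$ term $f_0(z)\Pi_\sigma(f_0^{-1}\wh{\omega_0 u})(z)$, which is holomorphic at $\sigma$ so its circular contour integral vanishes.

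There is one inaccuracy worth flagging: your parenthetical claim that $g_\ell$ is holomorphic at $\sigma$ because ``its poles lie at $\sigma+k$, $k\ge1$'' is not true in general. The recursion produces factors $T^{-k}f_0^{-1}$, whose poles sit at $\sigma'+k$ for \emph{every} pole $\sigma'$ of $f_0^{-1}$, not just the one under consideration; if $\sigma-k$ happens to be another zero of $f_0$ then $g_\ell$ may well have a pole at $\sigma$. (The paper itself allows $N_\sigma^{(\ell)}>0$ in Section~\ref{sec:3.3}.) Fortunately this does not damage your argument: the residue at $\sigma$ still produces a finite sum of terms $t^{\ell-\sigma}(\log t)^j$, just with possibly higher $j$, and those remain square-integrable. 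So the conclusion of your first step stands with the same one-line justification.

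The one substantive difference from the paper is technical packaging. The paper converts the circular contour into a difference of two vertical Mellin lines, which forces it to assume holomorphicity in a strip and then, in a second pass, to kill unwanted poles of $g_\ell$ in that strip by multiplying with an auxiliary Mellin symbol $\psi$ (Lemma~\ref{lem:meromorph}). By keeping the small circle around $\sigma$ throughout, you only ever need holomorphicity \emph{at} $\sigma$, and the identity $\sum_{\ell=0}^{m}(T^{-\ell}f_{m-\ell})g_\ell=\delta_{m0}f_0$ makes the integrand vanish identically for $1\le m\le\mu_\sigma$ and holomorphic for $m=0$; no auxiliary $\psi$ is needed. This is a genuine simplification of the paper's argument.
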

\begin{proof}
By construction $\frakE_\sigma$ is contained in $\calK^{\infty,0}(\rz_+)$. 
Now let $v=G_\sigma(u)$ with $u\in\calK^{0,\mu}(\rz_+)$. We show that $Av$ belongs to $L^2(\rz_+)$. 

First assume that all the integrands appearing in the explicit expression of 
$G_\sigma(u)$ are holomorphic in 
$\mathcal{Z}\setminus\{\sigma\}$ for a vertical strip $\mathcal{Z}=\{z\in\cz\st |\re(\sigma-z)|\le\eps\}$ 
$($the general case we shall treat below$)$. Then we can replace in the explicit expression 
of $G_\sigma(u)$ the integrals $\displaystyle\int_{|z-\sigma|=\eps}$ by the difference 
$\displaystyle\int_{\re z=\re\sigma+\eps}-\int_{\re z=\re\sigma-\eps}$, 
where the lines are oriented upwards. Note that each of the latter two integrals is 
an inverse Mellin transform of the corresponding integrand. 
Now we decompose the operator $A$ as 
 $$A=\omega_1 t^{-\mu}\sum\limits_{j=0}^{\mu-1} t^j f_j(-t\partial_t)+R,$$
where $R$ is a remainder that maps $\calK^{\mu,0}(\rz_+)$ to $L^2(\rz_+)$, and 
$\omega_1$ is chosen in such a way that $\omega\omega_1=\omega_1$. Observing that 
$\omega G^{(\ell)}_\sigma$ maps into $\calK^{\infty,\mu+\ell-\mu_\sigma-\delta}(\rz_+)$ 
for arbitrarily small $\delta>0$, we see that $Av\in L^2(\rz_+)$ provided 
 $$\omega_1\sum_{j=0}^{\mu_\sigma}
   \sum_{l=0}^{\mu_\sigma-j}t^j f_j(-t\partial_t)G^{(\ell)}_\sigma(u)
   \;\in\; \calK^{0,\mu}(\rz_+).$$
By rearranging the summation this is equivalent to 
 $$\omega_1\sum_{k=0}^{\mu_\sigma}t^k\sum_{\ell=0}^{k}(T^{-\ell}f_{k-\ell})(-t\partial_t)
   (t^{-\ell}G^{(\ell)}_\sigma(u))
   \;\in\; \calK^{0,\mu}(\rz_+);$$
we also have used the Mellin operator identity 
$f(-t\partial_t)t^{-\rho}=t^{-\rho}(T^{\rho}f)(-t\partial_t)$. 
The contribution of the inner sum (that over $\ell$) equals, for each $k$,  
 $$\Big(\int_{\re z=\re\sigma+\eps}-\int_{\re z=\re\sigma-\eps}\Big)
   \sum_{\ell=0}^{k}(T^{-\ell}f_{k-\ell})(z)g_\ell(z)
   \Pi_\sigma(f_0^{-1}\wh{\omega_0u})(z)\,\dbar z.$$
However this equals zero as each integrand is holomorphic in the strip $S$, since by 
definition of the $g_j$'s it actually coincides with 
$\delta_{k0}f_0(z)\Pi_\sigma(f_0^{-1}\wh{\omega_0u})(z)$. 

It remains to treat the case where the integrands may have poles in $\mathcal{Z}$ other than $\sigma$. 
However, in this case one takes a function $\varphi\in\cicomp(\rz_+)$ such that 
$\psi:=\calM\varphi$ vanishes to high order in all poles in $\mathcal{Z}$, except for $\sigma$ where 
$1-\psi$ vanishes of high order (cf. Lemma \ref{lem:meromorph}, below). 
Then replace the $g_j$ by $g_j\psi$. This does not effect 
the operator $G_\sigma$, and one can proceed as before, finishing with the expression 
$\delta_{k0}\psi(z)f_0(z)\Pi_\sigma(f_0^{-1}\wh{\omega_0u})(z)$ which is holomorphic in 
the strip $\mathcal{Z}$, again. 
\end{proof}

\begin{proposition}\label{prop:basis}
Let $u,v\in\calK^{\mu,\mu}(\rz_+)$. Then $G_\sigma(u)=G_\sigma(v)$ if and only if 
$G^{(0)}_\sigma(u)=G^{(0)}_\sigma(v)$. In particular, $\frakE_\sigma$ has the same 
dimension as $\wh{\frakE}_\sigma$. 
\end{proposition}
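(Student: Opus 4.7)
The plan is to show that both $G_\sigma u$ and $G_\sigma^{(0)}u$ are determined by, and in turn determine, the Laurent principal part
$$\pi_\sigma(u):=\Pi_\sigma\bigl(f_0^{-1}\wh{\omega_0 u}\bigr)$$
of the meromorphic function $f_0^{-1}\wh{\omega_0 u}$ at $z=\sigma$. Once this is established, the equivalence of the two equalities follows at once, and the dimension statement drops out as a corollary.

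The first step is to observe that by the residue theorem any integral of the form $\int_{|z-\sigma|=\eps}t^{-z}h(z)\,\dbar z$ equals $\mathrm{Res}_{z=\sigma}(t^{-z}h(z))$. Expanding
$$t^{-z}=t^{-\sigma}\smsum_{n\ge 0}\frac{(-\log t)^n}{n!}(z-\sigma)^n$$
shows that this residue depends only on the Laurent principal part of $h$ at $\sigma$ and has the explicit form $t^{-\sigma}Q(\log t)$ for a polynomial $Q$ whose coefficients recover the negative Laurent coefficients of $h$ (up to factorials). Since $\wh{\omega_0 u}$ is holomorphic in a neighbourhood of $\sigma$ (because $1/2-\mu<\re\sigma<1/2$ and $u\in\calK^{\mu,\mu}(\rz_+)\subset\calK^{0,\mu}(\rz_+)$), the only pole of $f_0^{-1}\wh{\omega_0 u}$ near $\sigma$ comes from $f_0^{-1}$, and the computation applied to $h=f_0^{-1}\wh{\omega_0 u}$ gives the first equivalence
$$G_\sigma^{(0)}(u)=G_\sigma^{(0)}(v)\iff \pi_\sigma(u)=\pi_\sigma(v).$$

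The same residue calculation applied to the integrand defining $G_\sigma^{(\ell)}u$ — which already contains $\pi_\sigma(u)$ explicitly — yields
$$G_\sigma^{(\ell)}u(t)=\omega(t)\,t^{\ell-\sigma}\,Q_\ell(\log t),$$
where $Q_\ell$ is a polynomial depending linearly on $\pi_\sigma(u)$. In particular each $G_\sigma^{(\ell)}u$, and hence $G_\sigma u=\omega(t)\smsum_{\ell=0}^{\mu_\sigma}t^{\ell-\sigma}Q_\ell(\log t)$, depends on $u$ only through $\pi_\sigma(u)$. This immediately gives the ``if'' direction: $G_\sigma^{(0)}(u)=G_\sigma^{(0)}(v)$ forces $\pi_\sigma(u)=\pi_\sigma(v)$ and therefore $G_\sigma u=G_\sigma v$. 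For the converse, I would invoke the linear independence of the germs $\{t^{\ell-\sigma}\log^k t\st 0\le\ell\le\mu_\sigma,\,k\in\nz_0\}$ at $t=0^+$, which forces every $Q_\ell(u-v)=0$ whenever $G_\sigma(u-v)=0$; the case $\ell=0$ says precisely $G_\sigma^{(0)}(u-v)=0$.

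The dimension statement, and the well-definedness of the isomorphism $\theta_\sigma$ in \eqref{eq:isom1}, then drop out: both $\frakE_\sigma=\mathrm{range}\,G_\sigma$ and $\wh\frakE_\sigma=\mathrm{range}\,G_\sigma^{(0)}$ are isomorphic to the common quotient $\calK^{0,\mu}(\rz_+)\big/\{u:\pi_\sigma(u)=0\}$, and the assignment $G_\sigma(u)\mapsto G_\sigma^{(0)}(u)$ is the induced linear isomorphism. The only substantive point in the argument is the bookkeeping for the residue computation — producing the explicit shape $t^{\ell-\sigma}Q_\ell(\log t)$ and separating the different $\ell$'s via the linear-independence step — both of which are routine applications of the residue calculus together with the standard fact that distinct monomials $t^\alpha\log^k t$ are linearly independent.
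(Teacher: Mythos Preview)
Your argument is correct and follows essentially the same route as the paper's proof: both identify $G_\sigma^{(0)}(w)=0$ with the vanishing of the principal part $\Pi_\sigma(f_0^{-1}\wh{\omega_0 w})$ via the residue computation, and both use that every $G_\sigma^{(\ell)}$ depends on $w$ only through this principal part. For the converse direction the paper phrases the key step as the trivial intersection $\mathrm{range}\,G_\sigma^{(0)}\cap\mathrm{range}\,\smsum_{\ell\ge 1}G_\sigma^{(\ell)}=\{0\}$, which is exactly your linear-independence statement for the monomials $t^{\ell-\sigma}\log^k t$; the two formulations are equivalent.
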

\begin{proof}
Set $w=u-v$. Let first $G^{(0)}_\sigma(w)=0$. Write 
 $$\Pi_\sigma(f_0^{-1}\wh{\omega_0w})(z)=\sum_{\ell=0}^n c_\ell(z-\sigma)^{-(\ell+1)}$$
with certain coefficients $c_\ell\in\cz$. Since 
 $$t^{-z}=\exp(-z\log t)=t^{-\sigma}\smsum_{k=0}^\infty \frac{(-\log t)^k}{k!}(z-\sigma)^k,$$
we see that the residue of 
$t^{-z}\Pi_\sigma(f_0^{-1}\wh{\omega_0w})(z)$ in $z=\sigma$ coincides with 
 $$t^{-\sigma}\sum_{\ell=0}^n \frac{(-1)^\ell}{\ell!}c_\ell\log^{\ell}t.$$
Thus it follows that $G^{(0)}_\sigma(w)=0$ if and only if all $c_\ell=0$, i.e., if and only if 
$\Pi_\sigma (f_0^{-1}\wh{\omega_0w})\equiv0$. This obviously implies $G_\sigma(w)=0$. 
Vice versa, $G_\sigma(w)=0$ implies that 
$G^{(0)}_\sigma(w)=-\sum\limits_{\ell=1}^{\mu_\sigma}G_\sigma^{(\ell)}(w)$. 
However, by construction 
 $$\mathrm{range}\,G^{(0)}_\sigma\cap 
   \mathrm{range}\,\smsum_{\ell=1}^{\mu_\sigma}G_\sigma^{(\ell)}=\{0\},$$
hence $G^{(0)}_\sigma(w)=0$. The second statement then follows, since 
$G^{(0)}_\sigma(u_1),\ldots,G^{(0)}_\sigma(u_m)$ 
are linearly independent if and only if  $G_\sigma(u_1),\ldots,G_\sigma(u_m)$ are, 
as can be seen by considering linear combinations.  
\end{proof}

\subsection{Explicit formulae for the domains}\label{sec:3.3}

The above defined domains can be characterized explicitly using the residue theorem. 
Before doing so let us state the following simple fact.  

\begin{lemma}\label{lem:meromorph}
Let $H$ be a Hilbert space and $\eps>0$ arbitrary. To any given pairwise different points 
$\sigma_0,\ldots,\sigma_k\in\cz$, non-negative integers $n_0,\ldots,n_k$, and elements 
$x_0,\ldots,x_{n_0}\in H$ there exists a function $u\in\scrC^\infty_0((0,\eps),X)$ such that 
the Mellin transform $\wh{u}$ of $u$ has zeros of order $n_j$ in the points $p_j$ for 
$j=1,\ldots,k$ and $d_z^j\wh{u}(p_0)/j!=x_j$ for $j=0,\ldots,n_0$. 
\end{lemma}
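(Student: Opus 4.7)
The plan is to handle the vanishing conditions at $\sigma_1,\ldots,\sigma_k$ separately from the jet prescription at $\sigma_0$ via a polynomial Mellin multiplier. I will set $P(z)=\prod_{j=1}^k(z-\sigma_j)^{n_j}$ and use the elementary Mellin operator identity $\wh{(-t\partial_t)^m v}(z)=z^m\wh v(z)$ (immediate from integration by parts) to rewrite any $u$ of the form $u=P(-t\partial_t)v$ as having Mellin transform $\wh u(z)=P(z)\wh v(z)$. Since $P(-t\partial_t)$ preserves $\scrC^\infty_0((0,\eps),H)$, the resulting $u$ lies in the correct function space, and $\wh u$ automatically vanishes to order at least $n_j$ at each $\sigma_j$ for $j\ge1$. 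Because the $\sigma_j$ are pairwise distinct, $P(\sigma_0)\ne 0$, so expanding both factors in Taylor series around $\sigma_0$ shows that the map on Taylor jets of length $n_0+1$ induced by multiplication by $P$ is lower triangular with nonzero diagonal $P(\sigma_0)$. Hence prescribing $d_z^j\wh u(\sigma_0)/j!=x_j$ for $j=0,\ldots,n_0$ is equivalent to prescribing $d_z^j\wh v(\sigma_0)/j!=\wt x_j$ for certain explicit $\wt x_j\in H$.

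Next I will reduce this remaining single-point jet problem to a moment problem by the substitution $v(t)=t^{-\sigma_0}\phi(t)$ with $\phi\in\scrC^\infty_0((0,\eps),H)$; since $t^{-\sigma_0}=\exp(-\sigma_0\log t)$ is smooth on $(0,\infty)$, this is a bijection of $\scrC^\infty_0((0,\eps),H)$ with itself. A direct computation gives $\wh v(z)=\wh\phi(z-\sigma_0)$, so the required jet condition becomes
\[
 \int_0^\infty(\log t)^j\,\phi(t)\,\frac{dt}{t}=j!\,\wt x_j,\qquad j=0,\ldots,n_0.
\]

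Finally, this moment problem is solvable by elementary linear algebra. Working coefficient-wise in $H$, it suffices to treat the scalar case. The linear map sending $\psi\in\scrC^\infty_0((0,\eps))$ to its vector $\bigl(\int_0^\infty(\log t)^j\psi(t)\,dt/t\bigr)_{j=0}^{n_0}$ of log-moments is surjective onto $\cz^{n_0+1}$: otherwise a nontrivial polynomial $Q$ of degree $\le n_0$ would annihilate the image via $\int_0^\infty Q(\log t)\psi(t)\,dt/t=0$ for every test function, forcing $Q(\log t)\equiv 0$ on $(0,\eps)$, which is impossible. Given surjectivity, I fix functions $\psi_0,\ldots,\psi_{n_0}\in\scrC^\infty_0((0,\eps))$ dual to the monomials $(\log t)^j$ and recover $\phi$ as an $H$-valued linear combination of the $\psi_j$. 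I expect no serious analytic obstacle in this scheme: the main delicate point is merely the bookkeeping needed to invert the triangular system of the first step together with the moment matrix of the last step, while smoothness and compact support in $(0,\eps)$ are preserved automatically at every manipulation.
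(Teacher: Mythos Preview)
Your proof is correct. The paper actually states this lemma without proof, calling it a ``simple fact,'' so there is no argument to compare against. Your approach---factoring out the vanishing conditions via the Mellin multiplier $P(z)=\prod_{j\ge1}(z-\sigma_j)^{n_j}$ using the identity $\wh{(-t\partial_t)v}(z)=z\,\wh v(z)$, then reducing the jet prescription at $\sigma_0$ to a finite log-moment problem via the shift $v=t^{-\sigma_0}\phi$, and finally solving that moment problem by the standard surjectivity/duality argument---is clean and complete. Two minor remarks: first, your construction produces zeros of order \emph{at least} $n_j$ at $\sigma_j$, which is exactly what the paper needs (cf.\ the proof of Proposition~\ref{prop:max} and the surjectivity statement after Proposition~\ref{prop:hatE}); if one insisted on order exactly $n_j$ one would simply arrange $\wh v(\sigma_j)\ne0$, which is compatible with the same moment argument. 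Second, the phrase ``working coefficient-wise in $H$'' is better replaced by what you actually do in the next sentence: find scalar test functions $\psi_0,\dots,\psi_{n_0}\in\scrC^\infty_0((0,\eps))$ dual to the log-monomials and then take the $H$-valued combination $\phi=\sum_j j!\,\wt x_j\,\psi_j$; no coordinatisation of $H$ is needed.
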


Now let us evaluate $G^{(0)}_\sigma u$ for some $\sigma\in\Sigma$. To this end let 
\begin{equation*}
  f_0(z)^{-1}\sim \sum_{k=0}^{n_\sigma}r_{\sigma,k}(z-\sigma)^{-(k+1)},\qquad 
  r_{\sigma,n_\sigma}\not=0,
\end{equation*}
denote the principal part of the Laurent expansion of $f_0^{-1}$. Then, by the residue theorem 
(see the proof of Proposition \ref{prop:basis}),   
\begin{equation}\label{gexplicit}
 (G^{(0)}_\sigma u)(t)=\omega(t)t^{-\sigma}\sum^{n_\sigma}_{j=0}
 \zeta_{\sigma,j}(u)\, \log^j t,
\end{equation}
where the coefficients $\zeta_{\sigma,\ell}(u)$ are computed by 
\begin{equation}\label{eq:zeta}
 \zeta_{\sigma,j}(u)=\frac{(-1)^{j}}{j!}\sum_{k=j}^{n_\sigma}
 r_{\sigma,k}\delta_{\sigma,k-j}(u),\qquad 
 \delta_{\sigma,i}(u)=d_z^{i}\wh{\omega_0 u}(\sigma)/i!.
\end{equation}
Writing $\zeta_\sigma(u)=(\zeta_{\sigma,0}(u),\ldots,\zeta_{\sigma,n_\sigma}(u))$ and 
$\delta_\sigma(u)=(\delta_{\sigma,0}(u),\ldots,\delta_{\sigma,n_\sigma}(u))$, 
in matrix notation this reads as 
 $$\zeta_\sigma(u)=B_\sigma \delta_\sigma(u),\qquad 
   B_\sigma=(b_{\sigma,jk})_{0\le j,k\le n_\sigma},$$
where the coefficients of $B_\sigma$ are given by 
 $$b_{\sigma,jk}=\begin{cases}
      (-1)^j r_{\sigma,j+k}/j!&\quad: j+k\le n\\
      0 &\quad: j+k>n
      \end{cases}$$
(this formula also holds true for a general cross-section $X$, where now the $r_{\sigma,k}$ are 
smoothing operators on $X$). As the left-upper triangular matrix $B_\sigma$ is invertible and, 
by the above lemma, $\delta_\sigma(u)$ runs through all of $\cz^{n_\sigma+1}$ when $u$ 
varies over $\scrC^\infty_0(\rz_+)$, we conclude the following. 

\begin{proposition}\label{prop:hatE}
If $n_\sigma$ is the multiplicity of the pole $\sigma\in\Sigma$ of $f_0^{-1}$ then  
\begin{equation*}
 \wh\frakE_\sigma=\bigg\{\omega(t)\sum_{j=0}^{n_\sigma}
  a_jt^{-\sigma}\log^j t \mbox{\boldmath$\;\Big|\;$\unboldmath} 
  a\in\cz^{n_\sigma+1}\bigg\}=\calE_{\wh{S}_\sigma}\cong \cz^{n_\sigma+1},
\end{equation*}
with asymptotic type $\wh{S}_\sigma=\{(\sigma,n_\sigma)\}$. 
\end{proposition}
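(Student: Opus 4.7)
The strategy is to collect the computations immediately preceding the statement: the explicit form of $G^{(0)}_\sigma u$ obtained from the residue theorem, together with the invertibility of $B_\sigma$ and the surjectivity of $\delta_\sigma$ guaranteed by Lemma \ref{lem:meromorph}. The range of $G^{(0)}_\sigma$ will then be forced to be exactly the space displayed in the statement.

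First I would invoke formula \eqref{gexplicit}: applying the residue theorem to the defining contour integral for $G^{(0)}_\sigma u$ and using the Laurent expansion of $f_0^{-1}$ around $\sigma$ (together with the expansion of $t^{-z}$ in powers of $z-\sigma$), one sees that every element of $\wh\frakE_\sigma$ has the form
 $$\omega(t)\,t^{-\sigma}\sum_{j=0}^{n_\sigma}a_j\log^j t$$
with $a=\zeta_\sigma(u)=B_\sigma\delta_\sigma(u)$. Thus the claimed inclusion $\wh\frakE_\sigma\subseteq\calE_{\wh S_\sigma}$ is automatic from the construction.

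For the reverse inclusion I need to show that every $a\in\cz^{n_\sigma+1}$ is realized. Since $B_\sigma$ has non-zero entries $(-1)^j r_{\sigma,n_\sigma}/j!$ on its anti-diagonal (all the entries above the anti-diagonal being zero) and $r_{\sigma,n_\sigma}\neq0$ by the very definition of the multiplicity $n_\sigma$, the matrix $B_\sigma$ is invertible. It therefore suffices to show $\delta_\sigma(u)$ ranges over all of $\cz^{n_\sigma+1}$, and this is exactly Lemma \ref{lem:meromorph} applied to $H=\cz$ and a single point $p_0=\sigma$: choose $u\in\scrC^\infty_0(\rz_+)$ supported where $\omega_0\equiv1$ so that $\wh{\omega_0 u}=\wh{u}$ has prescribed Taylor coefficients at $\sigma$. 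This yields equality, and the identification with $\cz^{n_\sigma+1}$ as well as the asymptotic type $\wh S_\sigma=\{(\sigma,n_\sigma)\}$ read off directly.

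I do not expect a genuine obstacle here: the only substantive points are the non-vanishing of the leading Laurent coefficient $r_{\sigma,n_\sigma}$ (built into the definition of $n_\sigma$) and the realizability of prescribed Mellin jets, which is precisely the content of Lemma \ref{lem:meromorph}. One small bookkeeping issue is to check that the domain of $G^{(0)}_\sigma$ is wide enough to contain the functions provided by the lemma, but since $\scrC^\infty_0(\rz_+)\subset\calK^{0,\mu}(\rz_+)$ this is immediate.
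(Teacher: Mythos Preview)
Your proposal is correct and follows exactly the paper's argument in the paragraphs leading up to the proposition: the residue computation \eqref{gexplicit}, the relation $\zeta_\sigma(u)=B_\sigma\delta_\sigma(u)$, invertibility of $B_\sigma$ from its nonzero anti-diagonal, and surjectivity of $\delta_\sigma$ via Lemma~\ref{lem:meromorph}. One tiny wording point: the zero entries of $B_\sigma$ sit where $j+k>n_\sigma$, i.e.\ below and to the right of the anti-diagonal rather than above it (the paper calls $B_\sigma$ ``left-upper triangular''), but this does not affect your invertibility argument.
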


Finding the explicit representation of the operators $G_\sigma^{(\ell)}$ works along the 
same lines. If we write 
 $$g_\ell(z)\sim\sum_{k=-N_\sigma^{(\ell)}}^\infty g^{(\ell)}_{\sigma,k}(z-\sigma)^k,
   \qquad N_\sigma^{(\ell)}\ge 0,$$
for the Laurent series of $g_\ell$ around $\sigma$ then a direct computation using the 
residue theorem shows that  
\begin{equation}\label{glexplicit}
 (G_\sigma^{(\ell)}u)(t)=\omega(t)t^{-\sigma+\ell}
 \sum_{j=0}^{N_\sigma^{(\ell)}+n_\sigma}
 \Big(\sum_{k=\max(0,j-N_\sigma^{(\ell)})}^{n_\sigma}
 \frac{(-1)^k k!}{(-1)^j j!} g^{(\ell)}_{\sigma,k-j}\zeta_{\sigma,k}(u)\Big)\log^j t
\end{equation}
with the $\zeta_{\sigma,k}(u)$ as introduced in \eqref{eq:zeta}. 
Now denote by $\skp{\cdot}{\cdot}_{n_\sigma}$ the inner product of $\cz^{n_\sigma+1}$ and 
by $e_k$ the $k$-th unit vector. 
If we then define the vectors $x^{(\ell)}_{\sigma,j}\in \cz^{n_\sigma+1}$, 
$j=0,\ldots,N_\sigma^{(\ell)}+n_\sigma$, by 
 $$\left\langle e_k,x^{(\ell)}_{\sigma,j}\right\rangle_{n_\sigma}
   =\frac{(-1)^j k!}{(-1)^k j!}g^{(\ell)}_{\sigma,k-j},\qquad k=0,\ldots,n_\sigma,$$
in case $0\le j\le N_\sigma^{(\ell)}$ and by 
 $$\left\langle e_k,x^{(\ell)}_{\sigma,j}\right\rangle_{n_\sigma}
   =\begin{cases} 
     0 &\quad: 0\le k\le j-N_\sigma^{(\ell)}-1\\
     \frac{(-1)^j k!}{(-1)^k j!}g^{(\ell)}_{\sigma,k-j} 
       &\quad: j-N_\sigma^{(\ell)}\le k\le n_\sigma
    \end{cases}
    ,\qquad k=0,\ldots,n_\sigma,$$
provided $N_\sigma^{(\ell)}+1\le j\le N_\sigma^{(\ell)}+n_\sigma$, 
then we can write 
\begin{equation}\label{glexplicit2}
 (G^{(\ell)}_\sigma u)(t)=\omega(t)t^{-\sigma+\ell}
 \sum_{j=0}^{N_\sigma^{(\ell)}+n_\sigma}
 \left\langle\zeta_\sigma(u),x^{(\ell)}_{\sigma,j}\right\rangle_{n_\sigma}\log^j t.
\end{equation}

Again using that $u\mapsto\zeta(u):\scrC^\infty_0(\rz_+)\to \cz^{n_\sigma+1}$ is surjective, 
we obtain the following description of the spaces $\frakE_\sigma$. 

\begin{proposition}\label{prop:E}
With the previously introduced notation
\begin{equation*}
 \frakE_\sigma
 =\bigg\{\omega(t)\sum_{\ell=0}^{\mu_\sigma}
 \sum_{j=0}^{N_\sigma^{(\ell)}+n_\sigma}
 \left\langle a,x^{(\ell)}_{\sigma,j}\right\rangle_{n_\sigma}
 t^{-\sigma+\ell}\log^j t
 \mbox{\boldmath$\;\Big|\;$\unboldmath} 
  a\in\cz^{n_\sigma+1}\bigg\}\cong \cz^{n_\sigma+1}. 
\end{equation*}
\end{proposition}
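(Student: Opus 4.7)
My plan is to read the Proposition as an immediate consequence of the explicit formula \eqref{glexplicit2} combined with the surjectivity of $u\mapsto\zeta_\sigma(u)$, then to pin down the asserted isomorphism through a dimension count that re-uses Proposition \ref{prop:basis} and Proposition \ref{prop:hatE}.

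First I would recall that by construction $\frakE_\sigma=\mathrm{range}\,G_\sigma=\mathrm{range}\,\sum_{\ell=0}^{\mu_\sigma}G_\sigma^{(\ell)}$, and then simply sum the identities \eqref{glexplicit2} over $\ell$. This gives, for every $u$,
\begin{equation*}
G_\sigma(u)(t)=\omega(t)\sum_{\ell=0}^{\mu_\sigma}\sum_{j=0}^{N_\sigma^{(\ell)}+n_\sigma}\left\langle\zeta_\sigma(u),x^{(\ell)}_{\sigma,j}\right\rangle_{n_\sigma}t^{-\sigma+\ell}\log^j t,
\end{equation*}
so every element of $\frakE_\sigma$ has the form claimed in the Proposition, with $a=\zeta_\sigma(u)$. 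The asserted equality then reduces to showing that, as $u$ runs through $\scrC^\infty_0(\rz_+)$, the vector $\zeta_\sigma(u)$ runs through all of $\cz^{n_\sigma+1}$.

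This surjectivity is where the preparatory material does the work. Lemma \ref{lem:meromorph} (applied with $H=\cz$, a single point $\sigma_0=\sigma$ and $n_0=n_\sigma$) says that $\delta_\sigma:\scrC^\infty_0(\rz_+)\to\cz^{n_\sigma+1}$ is already surjective. The relation $\zeta_\sigma(u)=B_\sigma\delta_\sigma(u)$ established just before the Proposition, together with the observation that $B_\sigma$ is anti-triangular with nonzero anti-diagonal entries $(-1)^j r_{\sigma,n_\sigma}/j!$ (here $r_{\sigma,n_\sigma}\neq 0$ because $n_\sigma$ is the exact order of the pole), shows that $B_\sigma$ is invertible, hence $\zeta_\sigma$ is surjective as well. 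Therefore the set on the right-hand side of the Proposition is exactly $\frakE_\sigma$.

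Finally, to identify $\frakE_\sigma$ with $\cz^{n_\sigma+1}$, I would argue that the surjective linear map
\begin{equation*}
\Phi:\cz^{n_\sigma+1}\lra\frakE_\sigma,\qquad a\longmapsto\omega(t)\sum_{\ell,j}\left\langle a,x^{(\ell)}_{\sigma,j}\right\rangle_{n_\sigma}t^{-\sigma+\ell}\log^j t,
\end{equation*}
must in fact be injective by a dimension count: Proposition \ref{prop:basis} gives $\dim\frakE_\sigma=\dim\wh\frakE_\sigma$, and Proposition \ref{prop:hatE} computes the latter to be $n_\sigma+1$. The only point requiring a moment of care, and therefore the most likely source of a snag, is confirming that the contributions of distinct $\ell=0,\ldots,\mu_\sigma$ in the explicit sum do not collapse; but this is automatic because the functions $t^{-\sigma+\ell}\log^j t$ appearing for different $\ell$ differ in their leading power and their ranges sum to a space of the correct dimension $n_\sigma+1$, by what we have already shown. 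This completes the proof.
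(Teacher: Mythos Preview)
Your proposal is correct and follows essentially the same approach as the paper: sum the identities \eqref{glexplicit2} over $\ell$, invoke the surjectivity of $u\mapsto\zeta_\sigma(u)$ (which the paper establishes just before Proposition~\ref{prop:hatE} via Lemma~\ref{lem:meromorph} and the invertibility of $B_\sigma$), and read off the isomorphism with $\cz^{n_\sigma+1}$ from the dimension equality $\dim\frakE_\sigma=\dim\wh\frakE_\sigma=n_\sigma+1$. Your closing remark about contributions of different $\ell$ ``collapsing'' is unnecessary---once $\Phi$ is surjective onto a space of dimension $n_\sigma+1$, injectivity is automatic without any further structural analysis.
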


The latter two Propositions \ref{prop:hatE} and \ref{prop:E} obviously yield 
an explicit representation of the isomorphism $\theta_\sigma:\wh\frakE_\sigma\to\frakE_\sigma$ 
from \eqref{eq:isom1}, namely  
\begin{equation}\label{eq:isom}
 \theta_\sigma^{-1}\Big(\omega(t)\sum_{j=0}^{n_\sigma}a_jt^{-\sigma}\log^j t\Big)
 =\omega(t)\sum_{\ell=0}^{\mu_\sigma}\sum_{j=0}^{N_\sigma^{(\ell)}+n_\sigma}
    \left\langle a,x^{(\ell)}_{\sigma,j}\right\rangle_{n_\sigma}
    t^{-\sigma+\ell}\log^j t.
\end{equation}
Note that $N_\sigma^{(0)}=0$ and $x^{(0)}_{\sigma,j}=e_j$ for $j=0,\ldots,n_\sigma$, 
so the summand on the right-hand side for $\ell=0$ is just the function from the 
left-hand side. 

We have seen in Proposition \ref{prop:hatE} that $\wh\frakE_\sigma$ equals  
$\calE_{\wh{S}_\sigma}$ with the asymptotic type $\wh{S}_\sigma=\{(\sigma,n_\sigma)\}$. 
However, $\frakE=\oplus_{\sigma\in\Sigma}\frakE_\sigma$ in general does not coincide 
with a space $\calE_{S}$ for any asymptotic type $S$. Choosing $S$ suitably, $\frakE$ 
will be a subspace of $\calE_S$ and we can construct a canonical projection of $\calE_{S}$ 
onto $\frakE$. This we shall describe in the following remark. 

\begin{remark}\label{rem:type}
With the previously introduced notation let 
$N\ge\max\limits_{\sigma\in\Sigma}\max\limits_{\ell=0}^{\mu_\sigma}N_\sigma^{(\ell)}$ 
and define the asymptotic type 
 $$S=\{(\sigma-\ell,N)\st \sigma\in\Sigma,\,\ell=0,\ldots,\mu_\sigma\}.$$
Then $\frakE$ is a subspace of $\calE_{S}$. For any $\sigma\in\Sigma$ there is an obvious 
projection $\wh{P}_\sigma$ of $\calE_{S}$ onto $\calE_{\wh{S}_\sigma}$, where 
$\wh{S}_\sigma=\{(\sigma,n_\sigma)\}$. Now define 
 $$\iota:\cz\to\rz,\qquad \iota(x+iy)=y.$$ 
Obviously $\iota(\Sigma)=\{y_1,\ldots,y_k\}$ is finite. 
Write $\Sigma_i=\Sigma\cap\iota^{-1}(y_i)$ 
and order the elements $\sigma_{i0},\sigma_{i1},\ldots,\sigma_{ik_i}$ of $\Sigma_i$ by 
decreasing real parts, i.e., $\re\sigma_{ij}>\re\sigma_{i(j+1)}$. We define 
a projection $\pi_i$ of $\calE_S$ onto $\oplus_{\sigma\in\Sigma_i}\frakE_\sigma$ in 
the following way: For $u\in\calE_S$ let $u_0=u$ and then 
 $$u_{j+1}:=u_{j}-\theta_{\sigma_{ij}}^{-1}\big(\wh{P}_{\sigma_{ij}} u_{j}\big), 
   \qquad j=0,\ldots,k_i-1,$$
using the isomorphisms from \eqref{eq:isom}. Define $\pi_iu=u_{k_i}$. The desired projection 
of $\calE_S$ onto $\frakE$ is then $\pi:=\pi_1+\ldots+\pi_k$.
\end{remark}

\section{Edge-degenerate Differential Operators on a Half-space}\label{sec:4}

We shall now use the results derived in the previous section for the description of 
natural, in a certain sense maximal, domains for edge-degenerate differential operators. However, first 
we provide some background material concerning pseudodifferential operators with operator-valued 
symbols that we shall need later on. 

\subsection{Pseudodifferential operators with operator-valued symbols}\label{sec:4.1}

Let $H$ be a Hilbert space. A group action on $H$ is a function $\kappa_\lambda:[0,\infty)\to\scrL(H)$, 
the bounded operators on $H$, such that 
\begin{itemize}
 \item[(1)] $\kappa_\lambda\kappa_\rho=\kappa_{\lambda\rho}$, $\kappa_1=\mathrm{id}$, 
 \item[(2)] $\kappa_\lambda x\xrightarrow{\lambda\to 1}x$ for any $x\in H$. 
\end{itemize}
We think $H$ to be equipped with such a group action. We shall denote by $\eta\mapsto[\eta]$ a 
positive smooth function on $\rz^q$ which coincides with $|\eta|$ outside the unit ball.  

\begin{definition}
For $s\in\rz$ we let $\calW^s(\rz^q,H)$ denote the closure of $\scrS(\rz^q,H)$ with respect to the norm 
 $$\|u\|=\Big(\int_{\rz^q}[\eta]^{2s}\|\kappa^{-1}(\eta)\scrF u(\eta)\|_H^2\Big)^{1/2},$$
where we define $\kappa(\eta):=\kappa_{[\eta]}$ and $\scrF$ denotes the Fourier transform. 
This is a Hilbert space. If the group action is trivial, $\kappa\equiv1$, we write $H^s(\rz^q,H)$. 
\end{definition}

The spaces $\calW^s(\rz^q,H)$ are called abstract edge Sobolev spaces. Note that the operator 
$L=\scrF^{-1}\,\kappa^{-1}(\eta)\,\scrF$ induces a canonical isometric isomorphism between 
$\calW^s(\rz^q,H)$ and $H^s(\rz^q,H)$. Pseudodifferential operators 
in this set-up are based on operator-valued symbols. 

\begin{definition}
Let $H$ and $\wt{H}$ be two Hilbert spaces with group action and $\mu\in\rz$. 
Then $S^\mu(\rz^q\times\rz^q;H,\wt{H})$ is the space of all smooth functions 
$a(y,\eta):\rz^q\times\rz^q\to\scrL(H,\wt{H})$ 
satisfying estimates 
 $$\left\|\wt{\kappa}^{-1}(\eta)\big(D^\alpha_\eta D^\beta_y a(y,\eta)\big)
    \kappa(\eta)\right\|_{\scrL(H,\wt{H})}\le 
    C_{\alpha\beta} [\eta]^{\mu-|\alpha|}$$
for any multi-indices $\alpha$ and $\beta$. The associated pseudodifferential operator is denoted by $a(y,D)$. 
\end{definition}

The operator $a(y,D)$ is defined analogously to the case where $H=\wt{H}=\cz$ and is initially a map from 
$\scrS(\rz^q,H)$ to $\scrS(\rz^q,\wt{H})$. It can be shown, cf. \cite{Schu1}, \cite{Seil2},  that it extends to continuous 
maps 
\begin{equation}\label{eq:pseudo}
 a(y,D):\calW^s(\rz^q,H)\lra \calW^{s-\mu}(\rz^q,\wt{H}),\qquad s\in\rz, 
\end{equation}
if $\mu$ is the order of $a(y,\eta)$. 

A function $p(y,\eta):\rz^q\times(\rz^q\setminus\{0\})\to\scrL(H,\wt{H})$ is called twisted homogeneous of degree 
$d$, if the identity 
\begin{equation}\label{eq:twisted}
 p(y,\lambda\eta)=\lambda^d \, \wt{\kappa}_\lambda \, p(y,\eta)\, \kappa_\lambda^{-1}
\end{equation}
holds true for any $(y,\eta)$ and any positive $\lambda$. The space of such twisted homogeneous functions 
we shall denote by $S^{(d)}(\rz^q\times\rz^q;H,\wt{H})$. 

\begin{definition}
A symbol $a\in S^\mu(\rz^q\times\rz^q;H,\wt{H})$ is called classical if there exists a sequence of twisted 
homogeneous symbols $a^{(\mu-j)}(y,\eta)$ of degree $\mu-j$ such that 
 $$a(y,\eta)-\sum_{j=0}^{N-1} \chi(\eta)a^{(\mu-j)}(y,\eta) \in S^{\mu-N}(\rz^q\times\rz^q;H,\wt{H})$$
for any $N\in\nz$, where $\chi(\eta)$ denotes a zero excision function. 
The space of such symbols shall be denoted by $S^\mu_{c\ell}(\rz^q\times\rz^q;H,\wt{H})$. We set 
 $$\sigma_\wedge^\mu(a)(y,\eta)=a^{(\mu)}(y,\eta)$$
and call this function the homogeneous principal symbol of $a$. 
\end{definition}

Occasionally we will consider $H$ and $\wt{H}$ with the trivial group action $\kappa\equiv1$. If this is not clear from
the context, we point this out by writing $S^\mu_{c\ell}(\rz^q\times\rz^q;H,\wt{H})_{(1)}$. 

In our application we will deal with Hilbert spaces that are function or distribution spaces on $\rz_+$. They 
will be always equipped with the `standard group-action' which is defined by 
\begin{equation}\label{eq:group}
 (\kappa_\lambda u)(t)=\lambda^{1/2}u(\lambda t),
\end{equation}
i.e., it is the dilation group we already have seen in Section \ref{sec:2}. We assume this from now on and do not indicate 
it furthermore. 

\begin{example}
For any $s\in\rz$ the spaces $H^s(\rz^q\times\rz_+)$ and $\calW^s(\rz^q,H^s(\rz_+))$ are naturally 
isomorphic, cf. Section 3.1.1 in \cite{Schu1}. 
\end{example}

The previous example motivates the following definition. 

\begin{definition}
For $s,\gamma\in\rz$ we define the `edge Sobolev spaces' 
 $$\calW^{s,\gamma}(\rz^q\times\rz_+):=\calW^s(\rz^q,\calK^{s,\gamma}(\rz_+)),$$
and subspaces 
 $$\calW^{s,\gamma}(\rz^q\times\rz_+)_S:=\calW^s(\rz^q,\calK^{s,\gamma-\theta}(\rz_+)\oplus\calE_S),$$
where $S\in\mathrm{As}(\gamma,\theta)$ is an asypmtotic type, cf. Definition {\rm\ref{def:coneasymp}}. 
\end{definition}

Using $L$ from above we have an isomorphism from $\calW^{s,\gamma}(\rz^q\times\rz_+)_S$ to 
 $$H^s(\rz^q,\calK^{s,\gamma-\theta}(\rz_+)\oplus\calE_S)=
     H^s(\rz^q,\calK^{s,\gamma-\theta}(\rz_+))\oplus H^s(\rz^q,\calE_S),$$
we  define 
\begin{equation}\label{eq:abc}
 \calV^s(\rz^q,\calE_S)=L^{-1}H^s(\rz^q,\calE_S). 
\end{equation}
This is a closed subspace of $\calW^{s,\gamma}(\rz^q\times\rz_+)_S$. Note that $\calE_S$ alone is not 
invariant under the group action due to the cut-off function $\omega$ involved in its definition, but 
$\calK^{s,\gamma-\theta}(\rz_+)\oplus\calE_S$ is.  

\subsection{Construction of the natural domain}\label{sec:4.2}

Consider an edge-degenerate differential operator $A$ on the half-space 
$\Omega=\rz^q\times\rz_+$ with $y$-independent coefficients (as before, we could allow a cone 
$\rz_+\times X$ with non trivial base). Near the boundary let 
 $$A=t^{-\mu}\sum_{j+|\alpha|=0}^\mu a_{j\alpha}(t)(tD_y)^\alpha(-t\partial_t)^j.$$
We assume that $(1-\omega)(t)A$ maps $\calW^{\mu,0}(\Omega)$ into 
$L^2(\Omega)=\calW^{0,0}(\Omega)$ (i.e., the coefficients behave well as $t\to\infty$) 
and that 
 $$f_0(z):=\sum_{j=0}^{\mu} a_{j0}(0)z^j$$
is meromorphically invertible and has no pole on the line $\re z=1/2-\mu$ 
(the meromorphic invertibility is automatically 
satisfied for suitably elliptic operators, and also holds in case of non trivial $X$). 
We will define a natural domain $\scrD_{\max}(A)\subset \calW^{\mu,0}(\Omega)$ such that  
$A:\scrD_{\max}(A)\lra L^2(\Omega)$ (we also could consider $\calW^{s,\gamma}(\Omega)$ for 
arbitrary $s$ and $\gamma$ but for convenience we take $s=\gamma=0$). 
By abuse of notation this domain, in general, does not yield the maximal closed extension of $A$ in the 
functional analytic sense. 

Denoting the Taylor expansion of the coefficient functions $a_{j\alpha}$ by 
 $$a_{j\alpha}(t)\sim\sum_{k=0}^\infty a_{j\alpha}^{(k)}t^k$$
we define the truncated operator $A_{tr}$ by
 $$A_{tr}=t^{-\mu}\sum_{\ell=0}^\mu t^\ell\sum_{k+|\alpha|=\ell}\sum_{j=0}^{\mu-|\alpha|} 
   a_{j\alpha}^{(k)}D_y^\alpha(-t\partial_t)^j,$$
and then set $\wt{A}:= L\circ A_{tr}\circ L^{-1}$. This operator can be viewed as a 
pseudodifferential operator with operator-valued symbol $\wt{\fraka}(\eta)$ which is  
 $$\wt{\fraka}(\eta)
   =t^{-\mu}\sum_{\ell=0}^\mu t^\ell \wt{f}_\ell(-t\partial_t,\eta),\qquad 
   \wt{f}_\ell(z,\eta)=[\eta]^{\mu-\ell}\sum_{k+|\alpha|=\ell}
   \sum_{j=0}^{\mu-|\alpha|} a_{j\alpha}^{(k)}\eta^\alpha z^j.$$
Of course, $\wt{\fraka}(\eta)$ is for each $\eta$ a cone differential operator and 
the $\wt{f}_\ell(z,\eta)$ are the corresponding conormal symbols. Note that 
$\wt{f}_\ell(z,\eta)$ is a classical symbol of order $\mu$ in $\eta$, and that 
$\wt{f}_0(z,\eta)=[\eta]^\mu f_0(z)$. Again we shall use the notation  
 $$\Sigma=\Big\{\sigma\in\cz\st \sigma\text{ is a pole of $f_0^{-1}$ and } 
   1/2-\mu<\re\sigma<1/2\Big\},$$
write $n_\sigma$ for the multiplicity of the pole $\sigma$ of $f_0^{-1}$, and set 
$\mu_\sigma=[\re\sigma+\mu-1/2]$. We use the asymptotic types 
 $$S_\sigma=\{(\sigma-\ell,m_\sigma)\st 0\le \ell\le \mu_\sigma\},\qquad 
     m_\sigma=\max_{\ell=0}^{\mu_\sigma}N_\sigma^{(\ell)},$$ 
cf. Remark \ref{rem:type}. 

Define recursively the functions $\wt{g}_\ell(z,\eta)$ as in \eqref{eq:recursion},  
replacing the $f_j(z)$ by $\wt{f}_j(z,\eta)$. Let $\omega,\omega_0\in\scrC^\infty(\rz_+)$ 
be arbitrary fixed cut-off functions. Then the expressions 
\begin{align*}
 [\wt\frakg^{(0)}_\sigma(\eta)u](t)
   &=\omega(t)\int_{|z-\sigma|=\eps}t^{-z}\wt{f}_0(z,\eta)^{-1}\wh{\omega_0 u}(z)\,\dbar z\\
 [\wt\frakg^{(\ell)}_\sigma(\eta)u](t)
   &=\omega(t)t^\ell\int_{|z-\sigma| =\eps}t^{-z}\wt{g}_\ell(z,\eta)
     \Pi_\sigma\big(\wt{f}_0(z,\eta)^{-1}\wh{\omega_0 u}(z)\big)\,\dbar z, 
\end{align*}
define operator-valued symbols 
\begin{align}\label{eq:green1}
 \wt\frakg_\sigma^{(\ell)}(\eta)\in 
  S^{-\mu}_{c\ell}(\rz^q;\calK^{0,\mu}(\rz_+),\calE_{S_\sigma})_{(1)}.
\end{align}

\begin{theorem}\label{thm:domain1}
With the symbols defined in \eqref{eq:green1} let 
$\wt\frakg_\sigma(\eta)
=\sum\limits_{\ell=0}^{\mu_\sigma}\wt\frakg^{(\ell)}_\sigma(\eta)$.  Then 
 $$\mathrm{range}\,\wt\frakg_\sigma(D)
    =\mathrm{range}\Big(\wt\frakg_\sigma(D):L^2(\rz^q,\calK^{0,\mu}(\rz_+))
    \lra H^\mu(\rz^q,\calE_{S_\sigma})\Big)$$
is a closed subspace of $H^\mu(\rz^q,\calE_{S_\sigma})$ which is mapped by $\wt{A}$ 
into $L^2(\Omega)$.  
\end{theorem}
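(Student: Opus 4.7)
I would prove the statement in two stages, corresponding to its two assertions.

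\emph{Mapping into $L^2(\Omega)$.} The operator $\wt{A}$ is the pseudodifferential operator associated with the operator-valued symbol $\wt\fraka(\eta)$, so that $\wt{A}\wt\frakg_\sigma(D)$ is governed by the composed symbol $\wt\fraka(\eta)\wt\frakg_\sigma(\eta)$. My plan is to show this composition lies in $S^0(\rz^q;\calK^{0,\mu}(\rz_+),L^2(\rz_+))$, from which the mapping property follows by the general continuity result \eqref{eq:pseudo}. The pointwise statement, for each fixed $\eta$, is exactly the parametrized version of Proposition \ref{prop:max}: the functions $\wt{g}_\ell(\cdot,\eta)$ are defined by the recursion \eqref{eq:recursion} with $f_j$ replaced by $\wt{f}_j(\cdot,\eta)$, so the identity \eqref{eq:recursion2} forces cancellation of the apparently singular contributions of $\wt\fraka(\eta)\wt\frakg_\sigma(\eta) u$ near $t=0$ order by order, leaving an $L^2$-element. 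The symbol estimates in $\eta$ are then obtained by tracking $[\eta]$-weights through the Laurent coefficients of $\wt{f}_0^{-1}(\cdot,\eta)=[\eta]^{-\mu}f_0^{-1}$ and of the $\wt{g}_\ell(\cdot,\eta)$, each of which inherits a controlled polynomial dependence on $\eta/[\eta]$.

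\emph{Closedness of the range.} My strategy is to construct a classical operator-valued projection symbol $\wt\pi_\sigma(\eta)\in S^0(\rz^q;\calE_{S_\sigma},\calE_{S_\sigma})_{(1)}$ whose range at each $\eta$ is exactly $\frakE_\sigma(\eta):=\mathrm{range}\,\wt\frakg_\sigma(\eta)$. By the parametrized analogue of Proposition \ref{prop:E}, $\frakE_\sigma(\eta)$ is finite-dimensional and depends smoothly on $\eta$, being the image of $\cz^{n_\sigma+1}$ under a linear map built from the $\eta$-dependent vectors $\wt{x}^{(\ell)}_{\sigma,j}(\eta)$ (the $\eta$-parametrized analogues of the vectors in Section \ref{sec:3.3}). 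Applying the construction of Remark \ref{rem:type} parametrically in $\eta$ furnishes such a symbol $\wt\pi_\sigma(\eta)$. Then $\wt\pi_\sigma(D)$ is a continuous projection on $H^\mu(\rz^q,\calE_{S_\sigma})$, whose range is closed. It remains to identify this range with $\mathrm{range}\,\wt\frakg_\sigma(D)$: one inclusion is immediate, and the reverse uses Proposition \ref{prop:basis} and Lemma \ref{lem:meromorph} at the symbol level to exhibit a continuous right inverse of $\wt\frakg_\sigma(\eta)$ onto its range, smoothly in $\eta$, so that every section of $\eta\mapsto\frakE_\sigma(\eta)$ of finite $H^\mu$-norm is realized as $\wt\frakg_\sigma(D)u$ for some $u\in L^2(\rz^q,\calK^{0,\mu}(\rz_+))$.

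\emph{Main obstacle.} The real work lies in verifying the symbol estimates for both $\wt\fraka(\eta)\wt\frakg_\sigma(\eta)$ and $\wt\pi_\sigma(\eta)$. Both reduce to controlling the $\eta$-dependence of the meromorphic functions $\wt{f}_0^{-1}(\cdot,\eta)$ and $\wt{g}_\ell(\cdot,\eta)$ and of their Laurent coefficients at the poles in $\Sigma$. Since the group action on $\calE_{S_\sigma}$ is trivial, the estimates amount to uniform boundedness of $\partial_\eta^\alpha$-derivatives with the expected $[\eta]^{-|\alpha|}$ decay — a calculation-heavy but conceptually straightforward exercise given that the $\wt{f}_j(z,\eta)$ are classical symbols of order $\mu$ in $\eta$ with a simple homogeneity structure. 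A secondary subtlety is the clean formulation of the $\eta$-parametrized version of Remark \ref{rem:type} when several poles of $f_0^{-1}$ share the same imaginary part, which forces an iterated peel-off argument whose smooth $\eta$-dependence has to be checked at each step.
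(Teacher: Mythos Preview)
Your treatment of the mapping property of $\wt A$ agrees with the paper's: both reduce to a parametrized version of Proposition~\ref{prop:max}, and the paper likewise omits the symbol-estimate bookkeeping you flag as the main obstacle.

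For closedness of the range you take a genuinely different route. The paper does not build a projection symbol at this stage. Instead it first establishes the explicit description of $\mathrm{range}\,\wt\frakg_\sigma(D)$ recorded in Theorem~\ref{thm:edgedom}, as the set of all
\[
\omega(t)\sum_{\ell=0}^{\mu_\sigma}\sum_{j=0}^{N_\sigma^{(\ell)}+n_\sigma}\big(\wt{x}^{(\ell)}_{\sigma,j}(D)a\big)\,t^{-\sigma+\ell}\log^j t,\qquad a\in L^2(\rz^q,\cz^{n_\sigma+1}),
\]
and then argues closedness by an elementary sequential argument: convergence of such a sequence in $H^\mu(\rz^q,\calE_{S_\sigma})$ forces convergence of each coefficient, and in particular the $\ell=0$ coefficients are $\wt{x}^{(0)}_{\sigma,j}(D)a^{(n)}=[D]^\mu a^{(n)}_j$, so $a^{(n)}\to a$ in $L^2(\rz^q,\cz^{n_\sigma+1})$ and the limit lies in the range. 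The crucial simplification is that the $\ell=0$ block is the invertible Fourier multiplier $[D]^\mu$, giving a continuous left inverse to $a\mapsto(\text{image})$ for free. Your projection-symbol approach is correct and in fact anticipates what the paper later does in Theorem~\ref{thm:proj2}; it buys a more structural picture (closedness as the range of a bounded idempotent) at the cost of having to construct $\wt\pi_\sigma(\eta)$ and then separately identify its range with that of $\wt\frakg_\sigma(D)$, whereas the paper's argument needs only the single observation about the $\ell=0$ term and avoids any right-inverse construction for $\wt\frakg_\sigma(\eta)$.
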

\begin{proof}
The closedness of $\mathrm{range}\,\wt\frakg_\sigma(D)$ we shall prove after Theorem 
\ref{thm:edgedom} below, where we derive a more explicit description. The  
mapping property of $\wt{A}$ follows from construction (the details are along the lines 
of the proof of Proposition \ref{prop:max} for the case of cone operators). We omit the details. 
\end{proof}

\subsection{The principal edge symbol}\label{sec:4.3}

The principal edge symbol of $A$ is, by definition, the function 
 $$\sigma^\mu_\wedge(A)(\eta)=
     t^{-\mu}\sum_{j+|\alpha|=0}^\mu 
      a_{j\alpha}(0)(t\eta)^\alpha(-t\partial_t)^j,\qquad\eta\not=0;$$
note that the coefficients $a_{j\alpha}$ are `frozen' in $t=0$. The principal edge symbol 
is $($formally$)$ twisted homogeneous of degree $\mu$ and pointwise, for any $\eta$, 
a cone differential operator on $\rz_+$ of which we assume that it is elliptic in the sense described in 
Section \ref{sec:3} (this is not a restriction, since this is always holds for elliptic edge-degenerate operators). 
We now can apply a procedure analogous to the one in the previous section. It is a bit simpler, since we 
do not have to apply a Taylor expansion to the 
coefficients.  First we define 
 $$\bar{f}_\ell(z,\eta)=|\eta|^{\mu-\ell}\sum_{|\alpha|=\ell}\sum_{j=0}^{\mu-|\alpha|} 
   a_{j\alpha}(0)\eta^\alpha z^j,\qquad \eta\not=0$$
and then recursively functions $\bar{g}_\ell(z,\eta)$ as in \eqref{eq:recursion}, replacing 
the $f_j(z)$ by $\bar{f}_j(z,\eta)$. We define homogeneous functions 
 $$\bar\frakg_\sigma^{(\ell)}(\eta)\in 
     S^{(-\mu)}_{c\ell}(\rz^q;\calK^{0,\mu}(\rz_+),\calE_{{S}_\sigma})_{(1)}$$
by the expressions 
\begin{align*}
 [\bar\frakg^{(0)}_\sigma(\eta)u](t)
   &=\omega(t)\int_{|z-\sigma|=\eps}t^{-z}\bar{f}_0(z,\eta)^{-1}\wh{\omega_0 u}(z)\,\dbar z\\
 [\bar\frakg^{(\ell)}_\sigma(\eta)u](t)
   &=\omega(t)t^\ell\int_{|z-\sigma| =\eps}t^{-z}\bar{g}_\ell(z,\eta)
     \Pi_\sigma\big(\bar{f}_0(z,\eta)^{-1}\wh{\omega_0 u}(z)\big)\,\dbar z   
\end{align*}
which are defined for $\eta\not=0$. We also set $\bar\frakg_\sigma(\eta)
=\sum\limits_{\ell=0}^{\mu_\sigma}\bar\frakg^{(\ell)}_\sigma(\eta)$. 
The following observation will be important for us; it actually follows directly from the 
construction of the symbols $\wt\frakg^{(\ell)}_\sigma(\eta)$ and $\bar\frakg^{(\ell)}_\sigma(\eta)$. 

\begin{proposition} 
The symbols $\bar\frakg^{(\ell)}_\sigma(\eta)$ and $\bar\frakg_\sigma(\eta)$ are the homogeneous 
principal symbols of $\wt\frakg^{(\ell)}_\sigma(\eta)$ and $\wt\frakg_\sigma(\eta)$, respectively. 
\end{proposition}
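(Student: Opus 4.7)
The plan is to track the $\eta$-dependence layer by layer: from the scalar conormal symbols $\wt{f}_\ell(z,\eta)$, through the meromorphic functions $\wt{g}_\ell(z,\eta)$ produced by the recursion \eqref{eq:recursion}, and finally through the contour-integral operators $\wt\frakg^{(\ell)}_\sigma(\eta)$. At each layer I would show that the construction yields a classical symbol in $\eta$ whose homogeneous principal part is obtained by substituting the corresponding barred ingredient. Since the symbols in \eqref{eq:green1} carry the trivial group action (the subscript $(1)$), twisted homogeneity reduces to ordinary $\eta$-homogeneity of degree $-\mu$, so the verification is purely symbolic in the usual classical sense.

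First I would analyse $\wt{f}_\ell$. For fixed $z\in\cz$ this is a classical scalar symbol in $\eta$ of order $\mu$: the contribution with $|\alpha|=\ell$, $k=0$ gives homogeneous growth $[\eta]^{\mu-\ell}|\eta|^\ell$; terms with $|\alpha|<\ell$ (hence $k\ge1$) have growth at most $[\eta]^{\mu-1}$, and $\chi(\eta)\bigl([\eta]^{\mu-\ell}-|\eta|^{\mu-\ell}\bigr)$ is smoothing. What survives in the principal part is precisely $\bar{f}_\ell(z,\eta)$. In particular $\wt{f}_0(z,\eta)=[\eta]^\mu f_0(z)$ has meromorphic-in-$z$ inverse $[\eta]^{-\mu}f_0(z)^{-1}$, a classical symbol of order $-\mu$ with principal part $\bar{f}_0(z,\eta)^{-1}$. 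An induction on $\ell$ using the recursion \eqref{eq:recursion}, together with the fact that $T^{-\rho}$ only shifts in $z$ and that products of classical symbols respect principal parts, shows that each $\wt{g}_\ell(z,\eta)$ is classical of order $0$ in $\eta$ with principal part $\bar{g}_\ell(z,\eta)$.

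Next I would substitute these $\eta$-symbol expansions into the defining contour integrals for $\wt\frakg^{(\ell)}_\sigma(\eta)$. Because the contour $|z-\sigma|=\eps$ is $\eta$-independent, and because the Mellin transform, the multiplications by $\wt{g}_\ell$ and $\wt{f}_0^{-1}$, and the principal-part projection $\Pi_\sigma$ all act only in the $z$-variable, extraction of the principal $\eta$-symbol commutes with the contour integration. Writing $\wt{f}_0^{-1}=\bar{f}_0^{-1}+r$ and $\wt{g}_\ell=\bar{g}_\ell+s$ with $r,s$ of one lower $\eta$-order, and expanding under the integral, the leading term is by construction $\bar\frakg^{(\ell)}_\sigma(\eta)$ and the remainder defines a symbol of order $-\mu-1$ with values in $\calE_{S_\sigma}$. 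A direct scaling check using $\bar{f}_\ell(z,\lambda\eta)=\lambda^\mu\bar{f}_\ell(z,\eta)$ propagates through the recursion to give $\bar{g}_\ell(z,\lambda\eta)=\bar{g}_\ell(z,\eta)$, and hence $\bar\frakg^{(\ell)}_\sigma(\lambda\eta)=\lambda^{-\mu}\bar\frakg^{(\ell)}_\sigma(\eta)$ for $\lambda>0$, confirming genuine homogeneity. Summing over $\ell$ yields the analogous statement for $\wt\frakg_\sigma$, since principal symbols add.

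The main obstacle I anticipate is purely bookkeeping rather than conceptual: one must verify that arbitrary $\eta$-derivatives of the remainder $r$ and $s$, after the $z$-contour integration has been performed, still satisfy the operator-norm estimates $[\eta]^{-\mu-1-|\alpha|}$ required for membership in $S^{-\mu-1}_{c\ell}(\rz^q;\calK^{0,\mu}(\rz_+),\calE_{S_\sigma})_{(1)}$. Since the contour is compact and $\eta$-independent, and since $[\eta]^{-\mu-j}f_0^{-1}$ is holomorphic on a neighborhood of the contour with uniform bounds for large $[\eta]$, this reduces to differentiating a finite collection of scalar classical symbols in $\eta$ under the integral sign, which is straightforward.
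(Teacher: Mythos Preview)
Your proof is correct and is essentially the same approach the paper has in mind: the paper itself does not spell out a proof, remarking only that the statement ``follows directly from the construction of the symbols $\wt\frakg^{(\ell)}_\sigma(\eta)$ and $\bar\frakg^{(\ell)}_\sigma(\eta)$,'' and your argument is precisely the natural unpacking of that remark --- tracking classicality and principal parts from $\wt{f}_\ell$ through the recursion to $\wt{g}_\ell$ and then through the $\eta$-independent contour integral.
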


It is a consequence of the results from Section \ref{sec:3} for cone differential operators that 
\begin{equation}\label{eq:princ}
 \kappa_{|\eta|}^{-1}\,\sigma^\mu_\wedge(A)(\eta)\,\kappa_{|\eta|}:
 \mathrm{range}\,\bar\frakg_\sigma(\eta)\subset\calE_{{S}_\sigma}\lra L^2(\rz_+).
 \end{equation}
 
\subsection{Explicit form of the domains}\label{sec:4.4}

In Section \ref{sec:3.3} we have found explicit representations of the domains for cone differential operators. 
We can follow the procedure introduced there, keeping track of the additional $\eta$-dependence of all involved symbols. 
Doing so we find symbols 
\begin{equation}\label{eq:xjl}
 \wt{x}_{\sigma,j}^{(\ell)}(\eta)\in S^{-\mu}_{c\ell}(\rz^q;\cz^{n_\sigma},\cz),\qquad 
 \bar{x}_{\sigma,j}^{(\ell)}(\eta)\in S^{(-\mu)}_{c\ell}(\rz^q;\cz^{n_\sigma},\cz),
\end{equation} 
$ j=0,\ldots,N_\sigma^{(\ell)}+n_\sigma$, 
which are determined in terms of the Laurent coefficients of the $g_\ell(z,\eta)$, such that $\bar{x}_{\sigma,j}^{(\ell)}(\eta)$ 
is the homogeneous principal symbol of $\wt{x}_{\sigma,j}^{(\ell)}(\eta)$ and the following result is true. 

\begin{theorem}\label{thm:edgedom}
With the above introduced notation $\mathrm{range}\,\wt{\frakg}_\sigma(D)$ equals 
\begin{equation}\label{eq:edgedom}
    \bigg\{\omega(t)\sum_{\ell=0}^{\mu_\sigma}
           \sum_{j=0}^{N_\sigma^{(\ell)}+n_\sigma}
           \big(\wt{x}^{(\ell)}_{\sigma,j}(D)a\big)\,
           t^{-\sigma+\ell}\log^j t
           \mbox{\boldmath$\;\Big|\;$\unboldmath} 
           a\in L^2(\rz^q,\cz^{n_\sigma+1})\bigg\}. 
\end{equation}
\end{theorem}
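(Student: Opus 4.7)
The overall idea is to upgrade the cone-operator calculations of Section \ref{sec:3.3} (specifically \eqref{glexplicit} and \eqref{glexplicit2}) to a version uniform in the parameter $\eta$, so that the Laurent-coefficient vectors $x^{(\ell)}_{\sigma,j}$ get promoted to classical operator-valued symbols $\wt{x}^{(\ell)}_{\sigma,j}(\eta)$. The final identification in \eqref{eq:edgedom} then reduces to a lifting argument using Lemma \ref{lem:meromorph}.

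First I would repeat the residue computation behind \eqref{glexplicit}--\eqref{glexplicit2} with $f_j$, $g_\ell$ replaced by $\wt{f}_j(z,\eta)$, $\wt{g}_\ell(z,\eta)$. Since $\wt{f}_0(z,\eta)=[\eta]^\mu f_0(z)$, the principal part of $\wt{f}_0(z,\eta)^{-1}$ at $\sigma$ is just $[\eta]^{-\mu}$ times that of $f_0(z)^{-1}$, so the residue calculation yields pointwise in $\eta$ the formula
$$[\wt\frakg^{(\ell)}_\sigma(\eta)v](t)=\omega(t)\,t^{-\sigma+\ell}\sum_{j=0}^{N^{(\ell)}_\sigma+n_\sigma}\wt{x}^{(\ell)}_{\sigma,j}(\eta)\bigl[\zeta_\sigma(v)\bigr]\,\log^j t,$$
where $\zeta_\sigma:\calK^{0,\mu}(\rz_+)\to\cz^{n_\sigma+1}$ is the $\eta$-independent continuous map from \eqref{eq:zeta} and $\wt{x}^{(\ell)}_{\sigma,j}(\eta)\in\scrL(\cz^{n_\sigma+1},\cz)$ is assembled from the factor $[\eta]^{-\mu}$ together with the Laurent coefficients of $\wt{g}_\ell(z,\eta)$ at $\sigma$, as in the formulas preceding \eqref{glexplicit2}. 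A short induction on the recursion \eqref{eq:recursion} shows that $\wt{g}_\ell(z,\eta)$ is classical of order $0$ in $\eta$, hence $\wt{x}^{(\ell)}_{\sigma,j}\in S^{-\mu}_{cl}(\rz^q;\cz^{n_\sigma+1},\cz)$; comparing with the analogous construction based on $\bar f_j$, $\bar g_\ell$ identifies $\bar{x}^{(\ell)}_{\sigma,j}$ as its homogeneous principal symbol.

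For the inclusion ``$\subseteq$'' of \eqref{eq:edgedom}, I would use that $\zeta_\sigma$ commutes with the Fourier transform in $y$ (being $\eta$-independent): for $u\in L^2(\rz^q,\calK^{0,\mu}(\rz_+))$, setting $a:=\zeta_\sigma\circ u\in L^2(\rz^q,\cz^{n_\sigma+1})$ gives
$$\wt\frakg_\sigma(D)u=\omega(t)\sum_{\ell=0}^{\mu_\sigma}\sum_{j=0}^{N^{(\ell)}_\sigma+n_\sigma}\bigl(\wt{x}^{(\ell)}_{\sigma,j}(D)a\bigr)\,t^{-\sigma+\ell}\log^j t.$$
For the reverse inclusion, Lemma \ref{lem:meromorph} (applied with $H=\cz$ and $p_0=\sigma$) produces elements $v_0,\ldots,v_{n_\sigma}\in\scrC^\infty_0(\rz_+)$ with $\zeta_\sigma(v_i)=e_i$; the linear map $\Psi:\cz^{n_\sigma+1}\to\calK^{0,\mu}(\rz_+)$, $b\mapsto\sum b_i v_i$, is a bounded right inverse of $\zeta_\sigma$. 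Tensoring with $\mathrm{id}_y$ lifts $\Psi$ to a bounded right inverse $L^2(\rz^q,\cz^{n_\sigma+1})\to L^2(\rz^q,\calK^{0,\mu}(\rz_+))$, so every $a$ on the right-hand side of \eqref{eq:edgedom} arises as $\zeta_\sigma u$ for some admissible $u$.

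Finally, the postponed closedness assertion of Theorem \ref{thm:domain1} falls out of the same representation: recalling from the discussion after \eqref{eq:isom} that $N^{(0)}_\sigma=0$ and $x^{(0)}_{\sigma,j}=e_j$, the coefficients of $\omega(t)t^{-\sigma}\log^j t$ coming from $\ell=0$ are precisely $[D]^{-\mu}a_j$; applying $[D]^\mu$ componentwise recovers $a$ continuously from the representation, so the parametrization in \eqref{eq:edgedom} is a topological isomorphism of $L^2(\rz^q,\cz^{n_\sigma+1})$ onto the range of $\wt\frakg_\sigma(D)$. The main obstacle is the bookkeeping in the first step, namely verifying classicality and the correct order of the $\wt{x}^{(\ell)}_{\sigma,j}(\eta)$ and matching their principal symbols to $\bar{x}^{(\ell)}_{\sigma,j}(\eta)$; this is routine but has to be carried out carefully through the recursion \eqref{eq:recursion} and the Laurent expansion at $z=\sigma$.
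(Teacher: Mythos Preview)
Your proposal is correct and follows exactly the route the paper indicates: the paper does not give a self-contained proof of this theorem but simply says to ``follow the procedure [of Section~\ref{sec:3.3}], keeping track of the additional $\eta$-dependence,'' and you carry this out in the expected way, including the surjectivity step via Lemma~\ref{lem:meromorph} and the identification $\wt{x}^{(0)}_{\sigma,j}(\eta)=[\eta]^{-\mu}e_j^*$. Your closing paragraph on closedness also matches the paper's completion of the proof of Theorem~\ref{thm:domain1} (note that the paper writes $[D]^\mu$ there, which appears to be a typo for $[D]^{-\mu}$; your version is the correct one).
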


Now it is easy to complete the proof of Theorem \ref{thm:domain1}. 

\begin{proof}[Proof of Theorem {\rm\ref{thm:domain1}}]
Let $(u^{(n)})$ be a sequence in $\mathrm{range}\,\wt{\frakg}_\sigma(D)$ that converges in 
$H^\mu(\rz^q,\calE_{S_\sigma})$ to $u$. Write  
 $$u^{(n)}= \omega(t)\sum_{\ell=0}^{\mu_\sigma}\sum_{j=0}^{N_\sigma^{(\ell)}+n_\sigma}
     \big(\wt{x}^{(\ell)}_{\sigma,j}(D)a^{(n)}\big)\,t^{-\sigma+\ell}\log^j t$$
with $a^{(n)}\in L^2(\rz^q,\cz^{n_\sigma+1})$. The convergence of $(u^{(n)})$ is equivalent 
to the convergence of any of the sequences $(\wt{x}^{(\ell)}_{\sigma,j}(D)a^{(n)})$ in 
$H^\mu(\rz^q,\cz)$. However, for $\ell=0$ we have 
$\wt{x}^{(\ell)}_{\sigma,j}(D)a^{(n)}=[D]^\mu a^{(n)}_j$, hence $(a^{(n)})$ converges in 
$L^2(\rz^q,\cz^{n_\sigma+1})$. Denoting the limit by $a$ it follows that $(u^{(n)})$ 
converges to 
 $$ \omega(t)\sum_{\ell=0}^{\mu_\sigma}\sum_{j=0}^{N_\sigma^{(\ell)}+n_\sigma}
       \big(\wt{x}^{(\ell)}_{\sigma,j}(D)a\big)\, t^{-\sigma+\ell}\log^j t,$$
which is an element of  $\mathrm{range}\,\wt{\frakg}_\sigma(D)$. 
\end{proof}  

\subsection{The natural domain of $A$}\label{sec:4.5}

As we have derived $\wt{A}$ from $A$ (actually, from $A_{tr}$) by conjugation with 
the isomorphism $L$ we obtain the natural domain for $A$ by pulling back the above 
constructions under $L$. In detail, we have the follwing: 

\begin{theorem}\label{cor:symbol}
With the symbols introduced in \eqref{eq:green1} define 
 $$\frakg_\sigma^{(\ell)}(\eta) 
   =\kappa(\eta)\,\wt\frakg_\sigma^{(\ell)}(\eta)\,\kappa^{-1}(\eta),\qquad
     \frakg_\sigma(\eta)=\sum_{\ell=0}^{\mu_\sigma}\wt\frakg^{(\ell)}_\sigma(\eta).$$
These are operator-valued symbols,  
 $$\frakg_\sigma^{(\ell)}(\eta),\frakg_\sigma(\eta)\in 
   S^{-\mu}_{c\ell}(\rz^q;\calK^{0,\mu}(\rz_+),\calE_{S_\sigma}),$$
and the range of $\frakg_\sigma(D):\calW^{0,\mu}(\Omega)\to \calW^{\mu,0}(\Omega)$ is a closed subspace of 
$\calV^\mu(\rz^q,\calE_{S_\sigma})$ which is mapped by ${A}$ into $L^2(\Omega)$. The homogeneous principal 
symbols are given by 
 $$\sigma^{-\mu}(\frakg_\sigma^{(\ell)})(\eta) 
    =\kappa_{|\eta|}\,\bar\frakg_\sigma^{(\ell)}(\eta)\,\kappa^{-1}_{|\eta|},\qquad
    \sigma^{-\mu}( \frakg_\sigma)(\eta)
    =\kappa_{|\eta|}\,\bar\frakg_\sigma(\eta)\,\kappa^{-1}_{|\eta|}.$$
\end{theorem}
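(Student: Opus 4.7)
My plan is to transfer Theorem~\ref{thm:domain1} and the homogeneity observation of Section~\ref{sec:4.3} from the trivial-group-action setting into the standard-group-action one, using the intertwining isomorphism $L=\scrF^{-1}\kappa^{-1}(\eta)\scrF$. The first step is to record the operator-level identity $\frakg_\sigma(D)=L^{-1}\wt\frakg_\sigma(D)L$, which follows directly from the definition $\frakg_\sigma^{(\ell)}(\eta)=\kappa(\eta)\wt\frakg_\sigma^{(\ell)}(\eta)\kappa^{-1}(\eta)$ and the Fourier-multiplier formula for pseudodifferential operators. This reduces everything to what has already been established for $\wt A$ and $\wt\frakg_\sigma$.

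For the symbol-class membership $\frakg_\sigma^{(\ell)}\in S^{-\mu}_{c\ell}(\rz^q;\calK^{0,\mu}(\rz_+),\calE_{S_\sigma})$ with the \emph{standard} group action on the target, the $\alpha=0$ estimate is immediate because $\kappa^{-1}(\eta)\frakg_\sigma^{(\ell)}(\eta)\kappa(\eta)=\wt\frakg_\sigma^{(\ell)}(\eta)$ satisfies the $[\eta]^{-\mu}$ bound of \eqref{eq:green1}. Higher $\eta$- and $y$-derivatives are handled by a Leibniz expansion combined with the standard fact that $\kappa^{-1}(\eta)D_\eta^\alpha\kappa(\eta)$ is a bounded operator family of order $-|\alpha|$, which reflects smooth dependence of the dilation group on its scalar parameter together with the classical symbol estimates for $\eta\mapsto[\eta]$. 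Classicality and the formula $\sigma^{-\mu}(\frakg_\sigma^{(\ell)})(\eta)=\kappa_{|\eta|}\bar\frakg_\sigma^{(\ell)}(\eta)\kappa^{-1}_{|\eta|}$ then follow from the short calculation
\[
 \kappa_{|\lambda\eta|}\bar\frakg_\sigma^{(\ell)}(\lambda\eta)\kappa^{-1}_{|\lambda\eta|}
 =\lambda^{-\mu}\kappa_\lambda\bigl[\kappa_{|\eta|}\bar\frakg_\sigma^{(\ell)}(\eta)\kappa^{-1}_{|\eta|}\bigr]\kappa^{-1}_\lambda,
\]
which exhibits twisted homogeneity of degree $-\mu$ with respect to the standard group action; recall that the Proposition in Section~\ref{sec:4.3} already identifies $\bar\frakg_\sigma^{(\ell)}$ as the trivially-acting principal symbol of $\wt\frakg_\sigma^{(\ell)}$.

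The range claim is obtained by transport under $L$: since $L$ restricts to an isometric isomorphism $\calW^{0,\mu}(\Omega)\to L^2(\rz^q,\calK^{0,\mu}(\rz_+))$, the intertwining identity gives $\mathrm{range}\,\frakg_\sigma(D)=L^{-1}\mathrm{range}\,\wt\frakg_\sigma(D)$, which is a closed subspace of $L^{-1}H^\mu(\rz^q,\calE_{S_\sigma})=\calV^\mu(\rz^q,\calE_{S_\sigma})$ by Theorem~\ref{thm:domain1} and \eqref{eq:abc}. For the mapping property of $A$ I would split $A=A_{tr}+(A-A_{tr})$: the first summand equals $L^{-1}\wt A L$ and sends the range into $L^2(\Omega)$ by the $\wt A$-statement in Theorem~\ref{thm:domain1}, while $A-A_{tr}$ collects the higher Taylor terms of the coefficients and therefore carries an extra positive power of $t$ relative to the worst $t^{-\mu}$ weight; combined with the controlled $t\to 0$ asymptotics of elements of $\calV^\mu(\rz^q,\calE_{S_\sigma})$ and the assumption on $(1-\omega)A$ at infinity, this places the result in $L^2(\Omega)$. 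I expect the main technical obstacle to be the careful verification of the $\kappa$-conjugation symbol estimates in the second step; everything else is a formal consequence once the $L$-intertwining has been set up.
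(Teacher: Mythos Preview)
Your approach is exactly the one the paper has in mind: the theorem is stated there without a detailed proof, preceded only by the remark that the natural domain for $A$ is obtained ``by pulling back the above constructions under $L$''. Your intertwining identity $\frakg_\sigma(D)=L^{-1}\wt\frakg_\sigma(D)L$, the transport of Theorem~\ref{thm:domain1} through $L$, and the twisted-homogeneity computation for the principal symbol are precisely the ingredients this pullback consists of; your splitting $A=A_{tr}+(A-A_{tr})$ to handle the mapping into $L^2(\Omega)$ is the natural way to make the last claim precise, and works because elements of $\calV^\mu(\rz^q,\calE_{S_\sigma})$ are supported near $t=0$ while $A-A_{tr}$ carries at least one extra power of $t$ there. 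One small comment: since the coefficients are $y$-independent there are no $y$-derivatives to check in the symbol estimate, so that part of your Leibniz argument is vacuous here.
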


Due to the previous result and the motivation given in Section \ref{sec:2}, the following definition appears natural: 

\begin{definition}\label{def:domain}
We define the natural domain of $A$ as    
 $$\scrD_{\max}(A)=\calW^{\mu,\mu}(\Omega)\oplus
     \mathop{\mbox{\Large$\oplus$}}_{\sigma\in \Sigma}\mathrm{range}\,\frakg_\sigma(D).$$
\end{definition}

Note that  $\scrD_{\max}(A)$ is contained in $\calW^{\mu,\eps}(\Omega)$ for any 
$0<\eps<\min\limits_{\sigma\in \Sigma}1/2-\re\sigma$.  

\begin{definition}\label{def:domainwedge}
According to \eqref{eq:princ} let us define  
 $$\scrD_{\max}(\sigma^\mu_\wedge(A)(\eta))=\calK^{\mu,\mu}(\rz_+)\oplus
 \mathop{\mbox{\Large$\oplus$}}_{\sigma\in \Sigma}
 \mathrm{range}\,\kappa_{|\eta|}\,\bar\frakg_\sigma(\eta).$$
\end{definition}
 
The following theorem shows the existence of a canonical projection on the natural domain of $A$. 

\begin{theorem}\label{thm:proj2}
Let $S$ be the asymptotic type defined in Remark {\rm\ref{rem:type}}. Then there exists a symbol 
 $$p(\eta)\in S^0_{c\ell}(\rz^q;\calK^{\mu,\mu}(\rz_+)\oplus\calE_S,\calK^{\mu,\mu}(\rz_+)\oplus\calE_S)$$ 
having the following properties: 
\begin{itemize}
 \item[(1)] $p(D)$ is a projection in $\calW^{\mu,\mu}(\Omega)\oplus\calV^\mu(\rz^q,\calE_S)$ having $\scrD_{\max}(A)$ as its range, 
  and $p(D)$ is the identity map on $\calW^{\mu,\mu}(\Omega)$. 
 \item[(2)] $\sigma_\wedge^0(p)(\eta)$ is a projection in $\calK^{\mu,\mu}(\rz_+)\oplus\calE_S$ onto 
  $\scrD_{\max}(\sigma^\mu_\wedge(A)(\eta))$
  and  $\sigma_\wedge^0(p)(\eta)$ is the identity map on $\calK^{\mu,\mu}(\rz_+)$. 
\end{itemize}
\end{theorem}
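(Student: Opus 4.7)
The plan is to build $p(\eta)$ block-diagonally on the ambient space
$\calK^{\mu,\mu}(\rz_+)\oplus\calE_S$: the identity on the first summand, and an
$\eta$-dependent projection $q(\eta)$ on the finite-dimensional summand $\calE_S$ whose
range reproduces $\bigoplus_\sigma\mathrm{range}\,\kappa_{|\eta|}\bar\frakg_\sigma(\eta)$
pointwise and $\bigoplus_\sigma\mathrm{range}\,\frakg_\sigma(D)$ after quantization.
Because $\calE_S$ is finite-dimensional, $q(\eta)$ is essentially a matrix-valued symbol,
so classicality will follow once twisted homogeneity and smoothness are in place.

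First I would construct the homogeneous principal symbol $\bar q(\eta)$ pointwise for
$\eta\ne 0$ by parametrizing the iterative procedure of Remark \ref{rem:type} in $\eta$.
The ``top-coefficient'' projections $\wh P_\sigma:\calE_S\to\calE_{\wh S_\sigma}$ of the
remark are kept as they are, but the reconstruction isomorphism $\theta_\sigma^{-1}$ of
\eqref{eq:isom} is replaced by its $\eta$-parametric version $\bar\theta_\sigma^{-1}(\eta)$,
obtained by substituting the homogeneous symbols $\bar x^{(\ell)}_{\sigma,j}(\eta)$ of
\eqref{eq:xjl} for the $x^{(\ell)}_{\sigma,j}$ and composing with $\kappa_{|\eta|}$ so as
to land in $\mathrm{range}\,\kappa_{|\eta|}\bar\frakg_\sigma(\eta)$. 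With the ordering of
singularities along each horizontal line $\iota^{-1}(y_i)$ as in Remark \ref{rem:type},
$\bar q(\eta)$ is for each $\eta\ne 0$ the required pointwise projection onto
$\scrD_{\max}(\sigma^\mu_\wedge(A)(\eta))$ and is the identity on $\calK^{\mu,\mu}(\rz_+)$.
A bookkeeping using the twisted homogeneity of $\bar x^{(\ell)}_{\sigma,j}(\eta)$ and of
$\kappa_{|\eta|}$ then shows that $\bar q(\eta)$ is twisted homogeneous of degree $0$.

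Second, multiply $\bar q(\eta)$ by a zero-excision $\chi(\eta)$ and extend by the identity
on the $\calK^{\mu,\mu}$-block to obtain $p_0(\eta)\in S^0_{c\ell}$ with the prescribed
principal symbol. Then $p_0\#p_0-p_0\in S^{-1}_{c\ell}$, and a Newton-type iteration
$p_{n+1}=3p_n^2-2p_n^3$ in the operator-valued symbol class, combined with Borel summation,
produces $p(\eta)\in S^0_{c\ell}$ with $p\#p-p\in S^{-\infty}$; hence $p(D)^2-p(D)$ is
smoothing. Since $\scrD_{\max}(A)$ is closed by Theorem \ref{thm:domain1} and a smoothing
perturbation does not move the essential spectrum of $p(D)$ away from $\{0,1\}$, a final
Riesz projection at the operator level repairs $p(D)$ into an exact idempotent with
unchanged principal symbol. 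Items (1) and (2) then follow: the principal-symbol assertions
by construction, and the range assertion via Theorem \ref{thm:edgedom} together with
Definition \ref{def:domain}.

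The main obstacle I anticipate is the passage from ``$p(\eta)^2=p(\eta)$ modulo lower
order'' to the operator identity $p(D)^2=p(D)$: Moyal composition is not pointwise
multiplication, so pointwise idempotency of the symbol is not sufficient. The
finite-dimensionality of $\calE_S$ is essential here, as it both guarantees convergence of
the Newton iteration on the $\calE_S$-block and reduces the residual smoothing discrepancy
to an elementary perturbation-of-projection problem in a Hilbert space.
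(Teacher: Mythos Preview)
Your proposal has a genuine gap in part (1), and it stems from overlooking the simplification that makes the paper's argument work.

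In this section the operator $A$ has $y$-independent coefficients (this is the standing hypothesis at the beginning of Section \ref{sec:4.2}), so every symbol in sight --- $\wt\frakg_\sigma(\eta)$, $\wt x^{(\ell)}_{\sigma,j}(\eta)$, etc. --- depends on $\eta$ only. For such Fourier multipliers the ``Moyal obstacle'' you worry about simply does not exist: $a(D)b(D)=(ab)(D)$ is pointwise multiplication. The paper therefore builds $p(\eta)$ directly as an \emph{exact} pointwise projection by running the recipe of Remark \ref{rem:type} with the \emph{full} symbols $[\eta]^\mu\,\wt x^{(\ell)}_{\sigma,j}(\eta)$ (not the homogeneous $\bar x^{(\ell)}_{\sigma,j}(\eta)$) substituted into \eqref{eq:isom}, extending by $1$ on $\calK^{\mu,\mu}(\rz_+)$, and conjugating with $\kappa(\eta)$. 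Pointwise idempotency of $p(\eta)$ then \emph{is} idempotency of $p(D)$, and the pointwise range identification via Theorem \ref{thm:edgedom} immediately yields $\mathrm{range}\,p(D)=\scrD_{\max}(A)$. The principal-symbol statement (2) follows because $\bar x^{(\ell)}_{\sigma,j}$ is the homogeneous principal part of $\wt x^{(\ell)}_{\sigma,j}$.

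Your route, by contrast, starts only from the homogeneous $\bar x^{(\ell)}_{\sigma,j}(\eta)$, cuts off, and then corrects by Newton iteration and a Riesz projection. Even granting that this produces an honest projection with the right principal symbol (so (2) would hold), you have lost control of the \emph{exact} range: Newton iteration and Riesz projections preserve the principal symbol but freely modify all lower-order terms, whereas $\scrD_{\max}(A)$ in Definition \ref{def:domain} is defined through the full symbols $\wt\frakg_\sigma$ (equivalently $\wt x^{(\ell)}_{\sigma,j}$), not merely through their principal parts. There is no mechanism in your argument forcing the range of the final projection to be $\scrD_{\max}(A)$ rather than some other closed subspace with the same ``principal'' structure. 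A secondary issue is that on the noncompact base $\rz^q$ an $S^{-\infty}$ remainder need not be compact, so the essential-spectrum step is also not justified as stated.

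The fix is to do what the paper does: use the full $\wt x^{(\ell)}_{\sigma,j}(\eta)$ from the start, observe that the resulting $\pi(\eta)$ is already an exact projection for each $\eta$, and exploit $y$-independence to pass directly to $p(D)^2=p(D)$ without any asymptotic correction.
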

\begin{proof}
The proof is a parameter-dependent variant of the procedure described in Remark \ref{rem:type}. 
First define isomorphisms $\theta_\sigma(\eta)$ as in \eqref{eq:isom}, replacing ${x}_{\sigma,j}^{(\ell)}$ by 
$[\eta]^\mu\wt{x}_{\sigma,j}^{(\ell)}(\eta)$. This yields a projection $\pi(\eta)$ as before. 
Let $\pi^{\prime}(\eta)$ denote the extension of $\pi(\eta)$ by 1 to $\calK^{\mu,\mu}(\rz_+)\oplus\calE_S$. 
Then $p(\eta):=\kappa(\eta)\,\pi^{\prime}(\eta)\,\kappa^{-1}(\eta)$ has the desired properties (recall that 
$\bar{x}_{\sigma,j}^{(\ell)}(\eta)$ is the homogeneous principal symbol of $\wt{x}_{\sigma,j}^{(\ell)}(\eta)$).  
\end{proof}

Referring to the terminology used in \cite{KSS} we call $p(D)$ a `singular' projection, indicating that it is the identity on 
$\calW^{\mu,\mu}(\Omega)$ and acts non trivially only on $\calV^\mu(\rz^q,\calE_S)$. 

\subsection{Outlook: Generalized boundary problems in projected subspaces spaces}\label{sec:4.6}

In \cite{KSS} we introduced a calculus for constructing, in particular, parametrices for operators of the form 
 $$
     \begin{pmatrix}A\\ T\end{pmatrix}:
     \calW^{\mu,\mu}(\Omega)_S\lra
     \begin{matrix}L^2(\Omega)\\ \oplus\\ L^2(\partial\Omega,\cz^k)\end{matrix},
 $$ 
where $A$ is an edge-degenerate differential operator on a bounded domain (actually, a manifold with edges) and $T$ 
are so-called singular boundary conditions (for details we refer to \cite{KSS}). A limitation of this calculus is that 
it refers to spaces of the form $\calW^{\mu,\mu}(\Omega)_S$ and that (the invertibility of) the principal edge symbol 
also refers to $\calK^{\mu,\mu}(\rz_+)\oplus\calE_S$ with the same type $S$. As we have seen above, the natural 
domains of edge operators are not necessarily of this form, and the principal edge symbol can come along with a different 
asymptotic structure. Theorem \ref{thm:proj2} suggests to formulate a calculus for operators in projected subspaces, i.e., 
to consider operators of the form
 $$
     \begin{pmatrix}A\\ T\end{pmatrix}:
     P(\calW^{\mu,\mu}(\Omega)_S)\lra
     \begin{matrix}L^2(\Omega)\\ \oplus\\ L^2(\partial\Omega,\cz^k)\end{matrix},
 $$ 
where $P$ is a `singular projection' in $\calW^{\mu,\mu}(\Omega)_S$ associated with $A$, i.e., $P$ acts as the identity map on 
$\calW^{\mu,\mu}(\Omega)$. Note that this resembles the calculus introduced by Schulze in \cite{Schu3} for boundary value 
problems not requiring Shapiro-Lopatinskij ellipticity, where the classical boundary conditions are replaced by conditions in 
projected subspaces; however, in this set-up the projected spaces live over the boundary, while the spaces on $\Omega$ 
still are the classical Sobolev spaces. Besides usual interior ellipticity, the principal edge symbol should now be considered as a map 
 $$
     \begin{pmatrix}\sigma^\mu_\wedge(A)\\ \sigma^\mu_\wedge(T)\end{pmatrix}(y,\eta):
     \mathrm{range}\, \sigma^\mu_\wedge(P)(y,\eta)\lra 
     \begin{matrix}L^2(\rz_+)\\ \oplus\\ \cz^k\end{matrix},
$$
and ellipticity requires the invertibility of this map whenever $\eta\not=0$. Though the singular projection refers to spaces 
over $\Omega$, and not over $\partial\Omega$, note that the actual non trivial contribution only comes from the `singular 
part' of $P$ acting on $\calV^\mu(\Omega,\calE_S)$; the range of $\mathrm{range}\, \sigma^\mu_\wedge(P)(y,\eta)$ 
can be viewed as a subbundle of the trivial product bundle $S^*\partial\Omega\times\calE_S$ where $S^*\partial\Omega$ denotes 
the co-sphere bundle over the boundary. In this sense, the situation has some similarity with the one described for boundary 
conditions in projected spaces over the boundary. 


\bibliographystyle{amsalpha}

\begin{thebibliography}{APS75}

\bibitem[BdM71]{Bout}
L.\ Boutet de Monvel. 
Boundary value problems for pseudo-differential operators. 
{\em Acta Math.} {\bf 126} (1971), 11-51.

\bibitem[CSS07]{CSS2}
S.\ Coriasco, E.\ Schrohe, J.\ Seiler.
Realizations of differential operators on conic manifolds with boundary. 
{\em Ann.\ Global Anal.\ Geom.} {\bf 31} (2007), 223-285. 

\bibitem[GKM06]{GKM2}
J.B.\ Gil, Th.\ Krainer, G.\ Mendoza. 
Resolvents of elliptic cone operators. 
{\em J.\ Funct.\ Anal.} {\bf 241} (2006), 1-55.

\bibitem[GM03]{GiMe}
J.B.\ Gil, G.A.\ Mendoza. 
Adjoints of elliptic cone operators.
{\em Amer.\ J.\ Math.} {\bf 125} (2003), 357-408. 

\bibitem[KSS08]{KSS}
D.\ Kapanadze, B.-W.\ Schulze, J.\ Seiler.
Operators with singular trace conditions on a manifold with edges. 
{\em Integral Equations Operator Theory} {\bf 61} (2008), 241-279. 

\bibitem[Le97]{Lesc} 
M.\ Lesch. 
{\em Operators of Fuchs type, Conical Singularities, and Asymptotic Methods}. 
Teubner-Texte Math.\ {\bf 136}, Teubner-Verlag, 1997. 

\bibitem[RS82]{ReSc}
S.\ Rempel, B.-W.\ Schulze. 
{\em Index Theory of Elliptic Boundary Problems}. 
Akademie-Verlag, 1982. 

\bibitem[SS05]{SchrSe}
E.\ Schrohe, J.\ Seiler.
The resolvent of closed extensions of cone differential operators. 
{\em Canad.\ J.\ Math.} {\bf 57} (2005), 771-811. 

\bibitem[Sch91]{Schu1}
B.-W. Schulze.
{\it Pseudo-Differential Operators on Manifolds with Singularities}.
North-Holland, Amsterdam, 1991.

\bibitem[Sch01]{Schu3}
B.-W. Schulze.
An algebra of boundary value problems not requiring {S}hapiro-{L}opatinskij conditions. 
{\em J. Funct. Anal.} {\bf 179} (2001), 374-408. 
  
\bibitem[SS02]{SchuSe}
B.-W.\ Schulze, J. Seiler. 
{The edge algebra structure of boundary value problems}. 
{\em Ann. Global Anal. Geom.} {\bf 22} (2002), 197-265. 

\bibitem[Se99]{Seil2}
J.\ Seiler. 
Continuity of edge and corner pseudodifferential operators. 
{\em Math.\ Nachr.} {\bf 205} (1999), 163-182. 

\end{thebibliography}

\end{document}